\newcommand{\Cr}[1]{\textbf{\textcolor{red}{{#1}}}}
\newcommand{\Cp}[1]{\textcolor{violet}{{#1}}}
\numberwithin{equation}{section}
\let\Horig\H
\DeclareMathOperator{\Var}{Var}
\DeclareMathOperator*{\re}{Re}
\newcommand{\beq}{ \begin{equation} }
\newcommand{\eeq}{ \end{equation} }
\newcommand{\beqq}{ \begin{equation*} }
\newcommand{\eeqq}{ \end{equation*} }
\newcommand{\ii}{\mathrm{i}}
\newcommand{\RN}[1]{%
  \textup{\uppercase\expandafter{\romannumeral#1}}%
}
\def \simeqids {\stackrel{\mathcal{D}}\simeq}
\def \P {\mathbb{P}}
\def \R {\mathbb{R}}
\def \dd {\mathrm{d}}
\def \g {\gamma}
\def \e {\delta} %%%%%%% NEW DEFINITION!
\def \l {\lambda}
\newcommand{\rma}[1]{\mathcal{#1}}
\newcommand{\gib}[1]{\mathfrak{#1}}
\def \ham {\rma H}
\def \tmp {T}
\def \sGOE {M}
\def \eg {\lambda}
\def \pat {\rma Z}
\def \ef {h}
\def \efv {\mathbf{g}}
\def \efve {g}
\def \efres {H}
\def \cp {\gamma}
\def \scl{\dd \sigma_{scl}(x)}
\def \airy {\alpha}
\def \egres {a}
\def \G {\rma G}
\def \event {\mathcal{E}_{\varepsilon}}
\def \ep {\varepsilon}
\def \mgn {\gib M}
\def \scp {\rma X}
\def \Gmgn {\rma G_{\mgn}}
\def \cpmgn {\cp_{\mgn}}
\def \cF {\mathcal{F}}
\def \PP {\mathbb{P}}
\newtheorem{theorem}{Theorem}[section]
\newtheorem{lemma}[theorem]{Lemma}
\theoremstyle{remark}
\DeclareMathOperator{\sphve}{\sigma}
\def \ovl {\gib R}
\def \so {p} %{q}
\def \spin {\boldsymbol{\sigma}}
\def \sphv {\spin}
\begin{document}

\title{Overlap of a spherical spin glass model with microscopic external field}
\author{
%Jinho Baik\footnote{Department of Mathematics, University of Michigan, Ann Arbor, MI, 48109, USA \newline email: \texttt{baik@umich.edu}} \and 
Elizabeth Collins-Woodfin\footnote{Department of Mathematics, University of Michigan,
Ann Arbor, MI, 48109, USA \newline email: \texttt{elicolli@umich.edu}} 
}

\maketitle

\begin{abstract}
We examine the behavior of the 2-spin spherical Sherrington-Kirkpatrick model with an external field by analyzing the overlap of a spin with the external field.  Previous research has noted that, at low temperature, this overlap exhibits dramatically different behavior in the presence of an external field as compared to the model with no external field.  The transition between those two settings was examined in a recent physics paper by Baik, Collins-Woodfin, Le Doussal, and Wu as well as a recent math paper by Landon and Sosoe. Both papers focus on the setting in which the external field strength, $h$, approaches zero as the dimension, $N$, approaches infinity.  
%They discuss two transitional regimes, mesoscopic ($h\sim N^{-1/6}$) and microscopic ($h\sim N^{-1/2}$).
In particular, the paper of Baik et al studies the overlap with a microscopic external field ($h\sim N^{-1/2}$) but without a rigorous proof. This paper aims to give a proof of that result.
%This paper focuses on the microscopic regime and provides an asymptotic formula for the generating function of the overlap at this scaling.  A similar result was included in the physics paper of Baik et al, but without the rigorous proof that is provided in this paper.  
The proof involves representing the generating function of the overlap as a ratio of contour integrals and then analyzing the asymptotics of those contour integrals using results from random matrix theory.

\end{abstract}

%\tableofcontents

\section{Introduction}
\subsection{Model set-up and notations}
%\subsubsection*{Model set-up}
The 2-spin spherical Sherrington-Kirkpatrick (SSK) spin glass model involves a spin variable $\spin=(\sigma_1, \cdots, \sigma_N)$ in $S_{N-1}$, the sphere of radius $\sqrt{N}$ in $\R^N$:
\beqq
	S_{N-1}=\{ \spin \in \R^N : \|\spin\|=\sqrt{N}\}.
\eeqq
The SSK model with external field is defined by the Hamiltonian 
\beq
\label{eq:def_Hamilt}
\ham(\sphv) =  - \frac12 \sum_{i,j = 1}^{N} \sGOE_{ij} \sphve_i \sphve_j - \ef \sum_{i = 1}^N \efve_i \sphve_i 
	= - \frac 12\sphv \cdot \sGOE \sphv  - h  \, \efv\cdot \sphv 
\eeq 
where $\efv$ is a standard gaussian random vector and $M$ is an $N\times N$ random symmetric matrix from the Gaussian orthogonal ensemble (GOE).  More specifically, for $i\le j$, the variables $\sGOE_{ij}$ are independent centered Gaussian random variables with variance $\frac1N$ for $i<j$ and $\frac2N$ for $i=j$.
By the symmetry condition,  $\sGOE_{ij}= \sGOE_{ji}$ for $i>j$. 

The Gibbs measure for this model is 
\beq\label{eq:Gibbsmeasure}
	p(\sphv)= \frac1{\pat_N} e^{-\beta \ham(\spin)} \quad \text{for $\sphv\in S_{N-1}$}
\eeq
where the parameter $\beta>0$ denotes the inverse temperature and is also written $\beta=\frac1T$. 
The partition function  $\pat_N$ is given by
\beq
\label{eq:def_fe}
	\pat_N = \int_{S_{N - 1}} e^{-\beta \ham(\spin)} \dd \omega_N(\sphv)
\eeq 
where $\omega_N$ is the normalized uniform measure on $S_{N - 1}$. Since $\sGOE$ and $\efv$ are random, the Gibbs measure is a random measure.   We use the notation
\beq
\langle A\rangle=\int A(\spin)\dd p(\spin)
\eeq
to denote the expectation of $A$ with respect to the Gibbs measure where $A$ depends on $\spin$.  Since the Gibbs measure depends on $M$ and $\mathbf{g}$, the Gibbs expectation $\langle A \rangle$ is a function of $M$ and $\mathbf{g}$.

%\subsubsection*{Overlaps}
Overlaps are of particular interest in the study of SSK and other spin glass models.  In this paper we focus on the overlap of a spin with the external field, denoted by $\mgn$, and briefly discuss the overlap with a replica, denoted by $\ovl$. These are defined as
\beq \label{eq:magntde}
	\mgn =\frac{\efv \cdot \sphv}{N}
	\quad\text{and}\quad
	\mathfrak{R}=\frac{\spin^{(1)} \cdot\spin^{(2)}}N
\eeq
where $\sphv$ is chosen randomly according to the Gibbs measure
and $\spin^{(1)},\spin^{(2)}$ are two independent copies (or replicas) of $\spin$, chosen from the Gibbs measure with the same values for $M$ and $\efv$.

%\subsubsection*{Key notations}
We denote the eigenvalues of $M$ and their corresponding unit eigenvectors by
\beqq
\lambda_1\geq\lambda_2\geq\cdots\geq\lambda_N\qquad\text{and}\qquad\mathbf{u}_1,\mathbf{u}_2,...,\mathbf{u}_N.
\eeqq
Many calculations will involve the inner product of the external field with an eigenvector.  For this purpose, we introduce the notation
\beqq
n_i=\efv\cdot\mathbf{u}_i.
\eeqq

\subsection{Background}
The SSK model was first studied by Kosterlitz, Thouless and Jones \cite{kosterlitz1976spherical} as a continuous analog of the model with Ising spins ($\spin\in\{-1,+1\}^N$) that was introduced by Sherrington and Kirkpatrick \cite{SherringtonKirkpatrick1975}.  Some of the most influential early work on these models focused on calculating the limiting  free energy (as $N\to\infty$) for the SK model, SSK model and the more general $p$-spin models.  A formula for the limiting free energy of the SK model (as well as the more general $p$-spin case) was first established by Parisi \cite{parisi1980sequence}, and analogous formulas for spherical models were developed by Kosterlitz et al \cite{kosterlitz1976spherical} for the SSK model and by Crisanti and Sommers \cite{crisanti1992sphericalp} for $p$-spin spherical models.  These formulas were later proven rigorously by Talagrand \cite{TalagrandSK,TalagrandSSK}.

The SSK model allows for certain the types of analysis that are not possible with the SK model.  In particular, the partition function for SSK has an equivalent formulation as a contour integral, which we describe in more detail in Section \ref{sec:integralreps}.  This contour integral representation of $\pat_N$ was first observed by \cite{kosterlitz1976spherical} and was later used by Baik and Lee \cite{BaikLeeSSK} to analyze the free energy of SSK with zero external field up to fluctuations for both the high temperature ($T>1$) and low temperature ($T<1$) cases.  They also used this method to analyze the free energy of several related models \cite{BaikLeeFerromagnetic, BaikLeeBipartite, BaikLeeWu}.

Overlaps are useful in terms of studying the distribution of spins (see, for example, \cite{Subag_2017} for a discussion of the geometry of the Gibbs measure in $p$-spin spherical models without external field).  In the case of SSK with $h=0$, the overlap with a replica concentrates around the values $\pm(1-T)$ \cite{panchenko2007overlap}.  The contour integral representation of the partition function can be used to study the fluctuations of this overlap.  In particular, Nguyen and Sosoe \cite{ nguyen2018central} find that, at high temperature and zero external field, the overlap with a replica converges to a centered gaussian distribution.  Landon and Sosoe \cite{ LandonSosoe} extend this analysis to the low temperature case and find that the overlap with a replica is no longer normally distributed but instead converges to a mean zero, bimodal random variable whose distribution can be expressed explicitly as a function of the GOE Airy point process.

The aforementioned results all indicate that, in the $h=0$ setting, the SSK model displays a phase transition at temperature $T=1$.  However, this transition does not occur in the case where we have fixed positive external field strength $h>0$ \cite{kosterlitz1976spherical, crisanti1992sphericalp, TalagrandSSK, ChenSen}.  In that case, the overlap concentrates around a positive value, $q(h)$, for all temperatures \cite{panchenko2007overlap}.

Since the low temperature case reveals a striking contrast between the SSK model with $h=0$ and the one with $h>0$, it is interesting to consider the behavior of the low temperature model when $h\to0$ as $N\to\infty$.  This question has been considered in the physics context by Fyodorov and le Doussal \cite{FyodorovleDoussal} who examine the free energy of SSK with external field and find transitional regimes at scalings $h\sim N^{-1/6}$ and $h\sim N^{-1/2}$ in the zero temperature case (see also \cite{DemboZeitouni, kivimae2019critical}).  We refer to these scalings as mesoscopic and microscopic respectively.

Recent papers by Landon and Sosoe \cite{LS} and by Baik, Collins-Woodfin, le Doussal, and Wu \cite{BCLW} have computed the distribution of the overlap with the external field as well as the overlap with a replica at the transitional scalings $h\sim N^{-1/6}$ and $h\sim N^{-1/2}$ for $T<1$.  (Both papers also analyze free energy, 
%and \cite{BCLW} analyzes a third type of overlap
but that will not be discussed in the current paper).  At the microscopic scaling, both papers find that the overlap with a replica concentrates around the values $\pm(1-T)$, as it does in the $h=0$ case.  However, the mean is no longer zero, but positive, and the mean increases as $hN^{1/2}$ increases.   We note that both papers use a similar method of representing the overlap in terms of contour integrals. The difference is that, while \cite{LS} is mathematically rigorous, \cite{BCLW} does not provide all the details of the argument but instead focuses on the physical implications.

In addition to the overlap with a replica, it is also interesting to study the overlap with the external field.  This quantity is used by physicists to study magnetism and susceptibility (for example \cite{kosterlitz1976spherical,cugliandolo2007nonlinear}), which will be discussed further in Section \ref{sec:scp}.  Both \cite{LS} and \cite{BCLW} analyze the overlap with the external field at the macroscopic ($h=O(1)$) and mesoscopic scalings. The paper \cite{BCLW} also analyzes this overlap at the microscopic scaling by providing a non-rigorous computation of the moment generating function for the overlap.  Their computation suggests that, in its leading order, the overlap is distributed like the sum of two independent random variables, one of which is Bernoulli and one of which is Gaussian, and both are of order $N^{-1/2}$.  Providing a rigorous proof of this result will be the focus of the current paper.  
%In \cite{LS}, the authors rigorously compute the first two moments of this overlap, up to sub-leading terms.  In addition to the first two moments, \cite{BCLW} computes, without rigorous proof, a formula for the full moment generating function of the overlap with a replica. 
%Their formula converges to the moment generating function of a shifted Bernoulli random variable.  Based on this information, as well as results for the overlap with the ground state, \cite{BCLW} conjectures that the spins concentrate on a double cone around $\pm\mathbf{u}_1$, the eigenvector associated with the leading eigenvalue of $M$.  Furthermore, \cite{BCLW} suggests that, while spins are uniformly distributed in the double cone when $h=0$, the spins at microscopic scaling prefer the side of the double cone that is nearer to $\mathbf{g}$.  More details about this geometric interpretation are provided in Section \ref{sec:geom}.
%  The works of \cite{LS} and \cite{BCLW} provide similar results, but differ in a few ways.  The proofs of \cite{LS} are fully rigorous, while those of \cite{BCLW} are provided at the physics level of rigor.  On the other hand, \cite{BCLW} provides some results for the microscopic ($h\sim N^{-1/2}$) scaling that are not addressed in \cite{LS}.  In particular, \cite{BCLW} computes the full distribution for both overlaps at the microscopic scaling whereas \cite{LS} only discusses the overlap with a replica at that scaling and computes the first two moments rather than the full distribution.
\subsection{Scope and organization of this paper}

As mentioned above, the goal of the current paper is to provide a rigorous proof of the formula conjectured in \cite{BCLW} for the distribution of the overlap with the external field at the microscopic scaling.  Although some intermediate steps of \cite{BCLW} are rigorous, many details are omitted and, most notably, they do not provide rigorous proofs for the asymptotics of the integrals.  We follow similar steps to those used in \cite{BCLW}, but fill in the missing details and supply the rigorous asymptotic analysis of the integrals. Furthermore, we provide more specificity regarding the probability with which the results hold.  The authors of \cite{BCLW} conjecture that the formula holds with a probability that tends to 1 as $N\to\infty$.  We show that it holds with probability at least $1-N^{-\ep/10}$ for any sufficiently small $\ep>0$.  

The proof in this paper utilizes the contour integral representation of the partition function as well as many results from random matrix theory.  
%The proof involves two main steps: critical point analysis and contour integral computation.  
%Unlike the approaches of \cite{LS} and \cite{BCLW}, which rely on the steepest descent method, we do not attempt to find a true steepest descent contour, but instead compute the integral (up to the sub-leading term) along an explicit contour where the magnitude of integrand shrinks sufficiently fast.
The asymptotic evaluation of the contour integral associated with the overlap requires a choice of the contour. The papers \cite{LS} and \cite{BCLW} use the steepest-descent contour. We found it simpler to use an explicit contour that agrees with the true steepest-descent contour only locally.

Finally, we note that our method can also be used to prove the moment generating function for the overlap with a replica, as conjectured in \cite{BCLW}.  However, we do not provide the details of that proof, since the result is proved via a different method in \cite{LS}.

Section \ref{sec:results} summarizes the main theorem to be proved in this paper and its implications.  In Section \ref{sec:prelim} we provide the notations and prerequisites that we will use throughout the paper including some lemmas that were implied but not rigorously proven in \cite{BCLW}.  Section \ref{sec:ext} 
%\ref{sec:2ol} 
provides the proof of Theorems \ref{thm:ext} 
%and \ref{thm:rep} respectively,
following similar steps as in \cite{BCLW} but filling in some details and providing more specificity about the probability with which results hold.
Section \ref{sec:intapprox} provides the detailed computations for the decay of the contour integrals outside a certain neighborhood of the critical point.  This is the most technical part of the paper and supplies key computations that were not addressed in \cite{BCLW}.  The lemma in this section is used in the proof in Section \ref{sec:ext}.  Finally, Section \ref{sec:2ol} provides a brief description of how this approach can be applied to the analysis of the overlap of two replicas.

%%%%%%%%%%%

\subsection*{Acknowledgements}
This work was supported in part by the NSF grants DMS-1701577 and DMS-1954790.  The author would like to thank Jinho Baik for all his helpful advice during the preparation of this paper.  The author would also like to thank Benjamin Landon for answering questions about his paper and Jeffrey Lagarias for providing support through his NSF grant.

%%%%%%%%%
\section{Summary of main theorem}\label{sec:results}

This paper focuses on the proof of Theorem \ref{thm:ext}, which provides the moment generating function for the overlap $\mgn$.  It is important to note that this overlap involve two types of randomness.  First, we have randomness from the choice of $M$ and $\efv$, which we refer to jointly as the ``disorder sample."  Second, we have randomness from the choice of spin variable.  For the results in this paper, we fix an arbitrarily disorder sample so that $\mgn$ is a random variable depending on a fixed disorder sample and random spin variable.  The moment generating function in Theorem \ref{thm:ext} provides the distribution of $\mgn$ as a function of the fixed disorder sample.  

This result is valid for an arbitrary disorder sample, subject to certain constraints that hold with high probability.  In particular, for any sufficiently small $\ep>0$, the event $\event$ (defined in Section \ref{sec:event}) provides a set of conditions on $M$ and $\efv$ that are sufficient for Theorem \ref{thm:ext} to hold.  Section \ref{sec:event} provides a detailed description of the event $\event$ along with a proof %(see Lemma \ref{lem:eventprob}) 
that 
\beq
\PP(\event)\geq1-N^{-\ep/10}\quad\text{for all sufficently small $\ep>0$ and all sufficiently large $N$.}
\eeq

\begin{theorem}\label{thm:ext} Given $T<1$ and $h=HN^{-1/2}$ for some some fixed $H\geq0$, we have the following asymptotic formula for the moment generating function of $\mgn$, the overlap with the external field.  This formula holds on the event $\event$ (which has probability at least $1-N^{-\ep/10}$) for any sufficiently small $\ep>0$ and $\xi=O(1)$.
\beq	
	\langle e^{ \xi \sqrt{N} \mgn} \rangle 
	=e^{ \efres \xi + \frac{ T \xi^2}{2} }
	\frac{\cosh \left( (\efres+T\xi) |n_1| \frac{\sqrt{1 - T}}{T} \right)}{\cosh \left(\efres |n_1| \frac{\sqrt{1 - T}}{T} \right) }
	\left(1+O(N^{-\frac{1}{21}+\frac\ep7})\right).
\eeq
\end{theorem}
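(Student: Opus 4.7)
The plan is to prove Theorem~\ref{thm:ext} by writing the moment generating function as a ratio of two partition functions and evaluating each by contour-integral asymptotics.

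\textbf{Reduction.} The Gibbs insertion $e^{\xi\sqrt{N}\mgn}=e^{\xi(\efv\cdot\spin)/\sqrt{N}}$ is equivalent to shifting the effective field coefficient $\beta h$ by $\xi/\sqrt{N}$; since $h=H/\sqrt{N}$ and $T=1/\beta$ this is the substitution $H\mapsto\tilde H:=H+T\xi$. Hence
\[
\langle e^{\xi\sqrt{N}\mgn}\rangle=\frac{\pat_N(\tilde H)}{\pat_N(H)}.
\]
The Kosterlitz--Thouless representation (Section~\ref{sec:integralreps}) writes each partition function as $C_N\int_\gamma e^{NG(z,H)}\,dz$ with
\[
G(z,H)=z-\tfrac{1}{2N}\sum_{i=1}^N\log(z-\tfrac{\beta\lambda_i}{2})+\tfrac{\beta^2 H^2}{4N}\sum_{i=1}^N\tfrac{n_i^2}{z-\beta\lambda_i/2},
\]
so the integrand carries a branch point and (for $H\ne 0$) a simple pole at each $z=\beta\lambda_i/2$.

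\textbf{Local analysis.} Split $G=G_{\mathrm{edge}}+G_{\mathrm{bulk}}$ by isolating the $i=1$ terms, so that $G_{\mathrm{bulk}}$ is analytic near $z_\star:=\beta\lambda_1/2$ while $G_{\mathrm{edge}}$ contributes the factors $(z-z_\star)^{-1/2}\exp[\beta^2 H^2 n_1^2/(4N(z-z_\star))]$. Following the strategy described in the introduction, I would choose an explicit contour that agrees with the true steepest-descent path only near $z_\star$ (a Hankel-type loop wrapping the branch cut) and becomes vertical outside a window $|z-z_\star|\le N^{-a}$, for an exponent $a\in(0,1)$ to be optimized. On the local window substitute $z=z_\star+w/N$; the edge factors become $(w/N)^{-1/2}\exp[\beta^2 H^2 n_1^2/(4w)]$, and
\[
NG_{\mathrm{bulk}}(z_\star+w/N,H)=NG_{\mathrm{bulk}}(z_\star,H)+(1-T)\,w+o(1),
\]
where the linear slope $1-T$ comes from $N\partial_z G_{\mathrm{bulk}}(z_\star)=N-\tfrac{1}{\beta}\sum_{i\ge2}\tfrac{1}{\lambda_1-\lambda_i}=N(1-1/\beta)+o(N)$, using the Stieltjes-transform limit $\tfrac{1}{N}\sum 1/(2-\lambda_i)\to 1$; higher-order Taylor corrections are controlled via standard edge rigidity of the GOE spectrum. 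The resulting local integral
\[
\oint w^{-1/2}\exp\!\big[(1-T)w+\beta^2 H^2 n_1^2/(4w)\big]\,dw
\]
is a Hankel integral for the modified Bessel function $I_{-1/2}$, and evaluates up to an $H$-independent constant to $\cosh(2\sqrt{ab})$ with $a=1-T$ and $b=\beta^2 H^2 n_1^2/4=H^2 n_1^2/(4T^2)$, giving $2\sqrt{ab}=|H n_1|\sqrt{1-T}/T$.

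\textbf{Bulk cancellation, errors, and main obstacle.} Taking the ratio $\pat_N(\tilde H)/\pat_N(H)$ produces the $\cosh$-ratio of the theorem statement. The exponential prefactor $e^{H\xi+T\xi^2/2}$ arises from the $H$-dependent part of $NG_{\mathrm{bulk}}(z_\star,H)$, namely $\tfrac{\beta H^2}{2}\cdot\tfrac{1}{N}\sum_{i\ge2}\tfrac{n_i^2}{\lambda_1-\lambda_i}$, which concentrates on $\beta H^2/2=H^2/(2T)$ by $\E[n_i^2]=1$ and the same Stieltjes-transform limit; the $\tilde H$-vs-$H$ difference $(\tilde H^2-H^2)/(2T)$ equals $H\xi+T\xi^2/2$. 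The main technical obstacle is bounding the contribution to each integral from the global part of the contour $|z-z_\star|>N^{-a}$ and showing it is negligible compared to the local piece; this is the content of the lemma in Section~\ref{sec:intapprox} and relies essentially on the event $\event$, which supplies edge rigidity of $M$, a lower bound on the spacing $\lambda_1-\lambda_2$, and control on the inner products $|n_i|$. Optimizing $a$ to balance the global contour bound against the local Taylor-expansion error gives the explicit rate $O(N^{-1/21+\ep/7})$, and $\P(\event^c)\le N^{-\ep/10}$ yields the stated probability.
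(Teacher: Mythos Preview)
Your proposal is correct and follows essentially the same route as the paper: write the generating function as a ratio of two contour integrals, localize each integral near the branch point $\lambda_1$ via the substitution $z=\lambda_1+w/N$, evaluate the resulting Hankel integral $\int w^{-1/2}e^{aw+b/w}\,dw$ as a $\cosh$, bound the tails by the lemma of Section~\ref{sec:intapprox}, and extract the Gaussian prefactor from the bulk pole sum $\frac{1}{N}\sum_{i\ge2}\frac{n_i^2}{\lambda_1-\lambda_i}\to1$. Your edge/bulk split is exactly how the paper's computation isolates the $i=1$ terms in equation~\eqref{eq:ext1/2contourcalc}.

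Two small points to tighten. First, the pole coefficient in your $G(z,H)$ is off by a factor of $N$: with $h=HN^{-1/2}$ the term should be $\frac{\beta^2H^2}{4N^2}\sum\frac{n_i^2}{z-\beta\lambda_i/2}$; your later formulas for the edge factor $\exp[\beta^2H^2n_1^2/(4N(z-z_\star))]$ and for the prefactor $\frac{\beta H^2}{2}\cdot\frac{1}{N}\sum_{i\ge2}\frac{n_i^2}{\lambda_1-\lambda_i}$ already use the correct scaling, so this is only a slip in the displayed definition. Second, the paper does not entirely bypass the critical point: it expands around $\gamma_{\mgn}=\lambda_1+p_{\mgn}N^{-1}$ (Lemmas~\ref{lem:ext1/2cp}--\ref{lem:ext1/2cpmgn}), uses the equation $G'_{\mgn}(\gamma_{\mgn})=0$ to obtain the $(\beta-1)$ slope, and---more importantly---the tail bound in Lemma~\ref{lem:intapproxspecialcase} is stated relative to $G_{\mgn}(\gamma_{\mgn})$ and explicitly uses $p_{\mgn}>T$ in its proof. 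Since you expand directly at $z_\star$ and never introduce $\gamma_{\mgn}$, you cannot invoke that lemma verbatim; you would need either to keep the critical-point bounds as auxiliary input or to renormalize the tail estimate against (say) $G_{\mgn}(\lambda_1+T/N)$. This is a cosmetic adjustment, not a structural one.
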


Note that the leading term on the right-hand side is the product of two terms implying that it is the moment generating function of a sum of two independent random variables. 
The exponential term is the moment generating function of a Gaussian random variable. 
For the ratio of the cosh functions, we note that the moment generating function of a shifted Bernoulli random variable that takes values $1$ and $-1$ with probabilities $P$ and $1-P$ respectively is $P e^{t} + (1-P) e^{-t}$.  The ratio of cosh functions in Theorem \ref{thm:ext} is of this form with $t=\xi|n_1|\sqrt{1-T}$ and
\beq
P=\frac{e^{\frac{H}{T}|n_1|\sqrt{1-T}}}
{e^{\frac{H}{T}|n_1|\sqrt{1-T}}+e^{-\frac{H}{T}|n_1|\sqrt{1-T}}}.
\eeq
%For the case when $P= \frac{e^{a}}{e^a+e^{-a}}$ for some $a$, this moment generating function is equal to the ratio of cosh functions.
Hence, for any large $N$, we can conclude that, on the event $\event$, the scaled overlap $\sqrt{N} \mgn$ behaves in its leading order like the independent sum of a Gaussian random variable (with mean $H$ and variance $T$) and a shifted Bernoulli random variable (which takes values $|n_1|\sqrt{1-T}$ and $-|n_1|\sqrt{1-T}$ with probability $P$ and $1-P$ respectively for the value of $P$ stated above).  
%This gives us the result stated in Theorem \ref{thm:ext1/2}.

We can use Theorem \ref{thm:ext} to obtain various information about the overlaps, including formulas for all moments of $\mgn$.  Of particular interest are the first moment (Gibbs expectation) and the variance.  Since $\mgn$ is of order $N^{-1/2}$ in the case of a microscopic external field, we examine the scaled overlap $\mgn\sqrt{N}$.  For the expectation, we get
\beq
\langle\mgn\sqrt{N}\rangle
=H+|n_1|\sqrt{1-T}\tanh\left(H|n_1|\frac{\sqrt{1-T}}{T}\right)
+O\left(N^{-\frac{1}{21}+\frac\ep3}\right)
\eeq
and for the variance, we get
\beq
\Var(\mgn\sqrt{N})%=\langle\mgn^2N\rangle-\langle\mgn\sqrt{N}\rangle^2
=T+n_1^2(1-T)\left(1-\tanh^2\left(H|n_1^2|\frac{\sqrt{1-T}}{T}\right)\right)+O\left(N^{-\frac{1}{21}+\frac\ep3}\right),
\eeq
where both of these formulas hold on the event $\event$.

\subsection{Application to magnetization and susceptibility}
\label{sec:scp}
One important application of Theorem \ref{thm:ext} is that it confirms the conjectures of \cite{BCLW} regarding magnetization and susceptibility.  Magnetization is defined to be $\langle\mgn\rangle$, the Gibbs average of the overlap with the external field.  Susceptibility is the magnetization per external field strength, given by
\beq
\mathcal{X}=\frac{\langle\mgn\rangle}{h}.
\eeq
It follows from Theorem \ref{thm:ext} that, on the event $\event$, when $T<1$ and $h=HN^{-1/2}$ for $H$ constant, the susceptibility is
\beq
\scp=1+\frac{|n_1|\sqrt{1-T}}{H}\tanh\left(H|n_1|\frac{\sqrt{1-T}}{T}\right)+O(N^{-\frac{1}{21}+\frac\ep3}).
\eeq
Of particular interest in the physics literature is the zero external field limit of the susceptibility.  Cugliandolo, Dean, and Yoshino \cite{cugliandolo2007nonlinear} discuss two ways to taking this limit, namely $\lim_{h\to0}\lim_{N\to\infty}\scp$ and $\lim_{N\to\infty}\lim_{h\to0}\scp$ (the first of these was also considered by \cite{kosterlitz1976spherical}).  Our results for the microscopic external field give a different way of calculating the second of these limits, namely with the computation
\beq\label{eq:limitingscp}
\lim_{H\to 0} \lim_{\substack{N\to \infty \\ h=HN^{-1/2}}} \scp
 = 1 +  \frac{ n_1^2  (1 - \tmp)}{\tmp}
\qquad \text{for $T<1$.}	
\eeq
This confirms the conjecture of \cite{BCLW}. It is also consistent with \cite{cugliandolo2007nonlinear}, which found that, after imposing the constraint $|n_1|=1$,
\beq
\lim_{N\to\infty}\lim_{h\to0}\scp=\frac1T
\qquad \text{for $T<1$}
\qquad
\cite{cugliandolo2007nonlinear}.
\eeq
By removing this constraint on $|n_1|$ and applying Theorem \ref{thm:ext}, we are able to show the dependence of the limiting susceptibility on the disorder sample in \eqref{eq:limitingscp}.  The paper \cite{BCLW} contains some further conjectures about the zero external field limit of differential susceptibility, which can also be verified using Theorem \ref{thm:ext}. 

\begin{comment}
These theorems confirm the conjecture from \cite{BCLW} regarding the distribution of spins in the presence of a microscopic field.  In particular, we can conclude that, on the event $\event$ (for any sufficiently small $\ep>0$), the spins concentrate on a double cone with angle $\cos^{-1}\sqrt{1-T}$ around $\pm\mathbf{u}_1$ and that they prefer the side of the cone closer to $\efv$, occurring on that side with probability 
\beq
P=\frac{e^{\frac{H}{T}|n_1|\sqrt{1-T}}}
{e^{\frac{H}{T}|n_1|\sqrt{1-T}}+e^{-\frac{H}{T}|n_1|\sqrt{1-T}}}.
\eeq
We know the spins concentrate on this double cone because the squared overlap with the ground state, defined as $\mathfrak{O}=\frac1N(\spin\cdot\mathbf{u}_1)^2$, has leading term $1-T$ on the event $\event$.  This follows from the proof of Result 9.5 in \cite{BCLW}, replacing the caveat ``asymptotically almost surely" with the more precise condition ``on the event $\event$."
The observation that the spins prefer the side nearer $\efv$ with probability $P$ follows from Theorems \ref{thm:ext} and \ref{thm:rep}.
\end{comment}

\subsection{Comparison with the results of \cite{BCLW} and \cite{LS}}
%Theorems \ref{thm:ext} and \ref{thm:rep} are similar to Results 8.6 and 10.6 from \cite{BCLW}, 
Theorem \ref{thm:ext} is similar to Result 8.6 from \cite{BCLW} but we provide a more precise statement of the result as well as a rigorous proof.  In particular, we specify bounds for the order of the subleading term and the probability with which the result holds.  The proof of Theorem \ref{thm:ext} can be found in Section \ref{sec:ext}.  An additional lemma needed in the proof is included in section \ref{sec:intapprox}.

A similar approach can also be used to obtain a moment generating function for the overlap of two replicas.  In other words, we can adapt the methods from Sections \ref{sec:ext} and \ref{sec:intapprox} to prove a rigorous version of Result 10.6 from \cite{BCLW}.  This will be discussed further in Section \ref{sec:2ol}.  We do not provide the details of that proof because a comparable result was obtained via a different method in \cite{LS} (see Theorem 2.14).

\begin{comment}
\Cr{Move to end?}
\Cp{
Landon and Sosoe \cite{LS} do not compute moment generating functions for $\mgn$ or $\ovl$, but they do compute the expectation and variance for $\ovl$.  Their formulas are consistent with ours and and include an additional term for more precision.  In particular, their formula for the expectation of $\ovl$ (Theorem 2.15 of \cite{LS}) is
 \beq\label{eq:LSrepmoment1}
 \langle\ovl\rangle=(1-Tm_N)\tanh^2\left(\frac{\sqrt{1-Tm_N}}{T}H|n_1| \right)+O(N^{-\frac23+\e})
\eeq
and their formula for the variance (Theorem 2.17 of \cite{LS}) is
\beq\label{eq:LSrepmoment2}
\Var(\ovl)=(1-Tm_N)^2\left(1-\tanh^4\left(\frac{\sqrt{1-Tm_N}}{T}H|n_1| \right)\right)+O(N^{-\frac23+\e})
\eeq
where both of these formulas hold with probability $1-N^{-\e_1}$ for some sufficiently small $\e>0$, and the quantity $m_N$ is defined as
\beq\label{eq:m_Ndef}
m_N:=\frac1N\sum_{i=2}^N\frac{n_i^2}{\lambda_1-\lambda_i}.
\eeq
As we will explain in the next section, $m_N=1+O(N^{-\frac13+\ep})$  on the event $\event$.  Substituting this asymptotic expression for $m_N$ into \eqref{eq:LSrepmoment1} and \eqref{eq:LSrepmoment2}, we get the formulas \eqref{eq:BCLWrepmoment1} and \eqref{eq:BCLWrepmoment2}.  This demonstrates that our moment generating function is consistent with their calculations for the expectation and variance of $\ovl$.}
\end{comment}

%%%%%%%%%
\section{Preliminaries}\label{sec:prelim}
\subsection{Contour integral representations}\label{sec:integralreps}

Recall that the partition function $\pat_N$ is defined by the surface integral
\beq
	\pat_N = \int_{S_{N - 1}} e^{-\beta \ham(\spin)} \dd \omega_N(\sphv).
\eeq 
It was shown by Kosterlitz, Thouless and Jones \cite{kosterlitz1976spherical} that this surface integral can be rewritten as a contour intergral of the form
\beq
\label{eq:laplace_trans}
	\pat_N = C_N\int_{\cp - \ii \infty}^{\cp + \ii \infty} e^{\frac{N}{2}\G(z)}\dd z \quad \text{with} \quad C_N = \frac{\Gamma(N/2)}{2\pi \ii (N\beta/2)^{N/2 - 1}}
\eeq
where
\beq
\label{eq:def_G}
	\G(z) = \beta z - \frac1N\sum_{i = 1}^N \log(z - \eg_i) + \frac{\ef^2\beta}{N}\sum_{i = 1}^N \frac{n_i^2}{z - \eg_i}
	%\quad \text{with} \quad  n_i=  \vu_i\cdot\efv
\eeq
and the contour is a vertical line intersecting the real axis at any $\cp> \eg_1$.  This result can be extended in a straightforward way to obtain contour integral representations for generating functions of overlaps \cite{BCLW, LS}.  The results we use are given in the following lemma and a proof can be found in \cite{BCLW}.

\begin{lemma}[\cite{BCLW}] \label{lem:contour}
For real parameter $\eta$, the moment generation function of the overlaps is
\beq   \label{eq:contour_rep}
	\langle e^{\beta \eta \mgn}\rangle =  \frac{\int e^{\frac N2 \Gmgn (z)} \dd z}{\int e^{\frac N2 \G(z)} \dd z}
%	\langle e^{\eta \ovl} \rangle = \frac{\iint e^{\frac{N}2 \Govl(z, w; \frac{\eta}{\beta N})} \dd z \dd w}{\iint e^{\frac{N}2 \Govl(z, w; 0)} \dd z \dd w}	
\eeq
where each contour is a vertical line passing to the right of all singularities and the function $\Gmgn(z)$
%, $\Gom(z)$,
is defined as follows:
\beq\label{eq:G_defs}
\begin{aligned}
&\Gmgn(z) := \beta z - \frac1N\sum_{i = 1}^N \log(z - \eg_i) + \frac{(\ef + \frac{\eta}{N})^2\beta}{N} \sum_{i = 1}^N \frac{n_i^2}{z - \eg_i}\\
%&\Govl(z, w;a) := \beta (z+w) - \frac1{N} \sum_{i=1}^N \log \left( (z-\eg_i)(w-\eg_i) - a^2 \right) 
%	+ \frac{\ef^2 \beta}{N} \sum_{i=1}^N \frac{n_i^2 (z+w-2\eg_i + 2a) }{(z-\eg_i)(w-\eg_i) - a^2}
\end{aligned}
\eeq
Note that $\Gmgn(z)$ is $\G(z)$ with $h$ replaced by $h+\eta N^{-1}$.  
%Similarly, $\Govl(z,w;a)$ is equal to $\G(z)+\G(w)$ in the case where $a=0$. 
\end{lemma}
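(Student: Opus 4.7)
My plan is to reduce \eqref{eq:contour_rep} directly to the contour representation \eqref{eq:laplace_trans}--\eqref{eq:def_G} of the partition function, which I am taking as given. The key observation is that the exponent $\beta\eta\mgn$ in the generating function is linear in $\spin$ and merges cleanly with the external-field term of the Hamiltonian. Since $\mgn = \efv\cdot\spin/N$, one has
\beqq
\beta\eta\mgn - \beta\ham(\spin) \;=\; \tfrac{\beta}{2}\,\spin\cdot\sGOE\spin + \beta\!\left(\ef + \tfrac{\eta}{N}\right)\!\efv\cdot\spin,
\eeqq
which is exactly $-\beta$ times the Hamiltonian at external-field strength $\ef + \eta/N$. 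Therefore, if $\pat_N(\ef')$ denotes the partition function \eqref{eq:def_fe} computed with $\ef$ replaced by $\ef'$,
\beqq
\langle e^{\beta\eta\mgn}\rangle \;=\; \frac{\int_{S_{N-1}} e^{\beta\eta\mgn - \beta\ham(\spin)}\,\dd\omega_N(\spin)}{\pat_N(\ef)} \;=\; \frac{\pat_N(\ef + \eta/N)}{\pat_N(\ef)}.
\eeqq

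Next I would substitute the contour representation \eqref{eq:laplace_trans} into numerator and denominator. The prefactor $C_N = \Gamma(N/2)/\bigl(2\pi\ii(N\beta/2)^{N/2-1}\bigr)$ depends only on $N$ and $\beta$, so it is the same on top and bottom and cancels. Inspecting \eqref{eq:def_G} and \eqref{eq:G_defs}, the function $\G$ at external-field value $\ef + \eta/N$ coincides with $\Gmgn$. Both contours can be taken to be the same vertical line $\re z = \cp$ with $\cp > \eg_1$, matching the convergence condition for \eqref{eq:laplace_trans}. This yields \eqref{eq:contour_rep} exactly.

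There is essentially no obstacle at this level: the lemma follows from a one-line algebraic identification once \eqref{eq:laplace_trans} is granted. The real content lies in the representation \eqref{eq:laplace_trans} itself, which is assumed. For completeness, I would recall that \eqref{eq:laplace_trans} is obtained by diagonalising $\sGOE = \eigvm\,\mathrm{diag}(\eg_i)\,\eigvm^{\tran}$, performing the orthogonal change of variables $\spin = \eigvm\tau$ (which preserves $\omega_N$ and sends $\efv\cdot\spin$ to $\sum_i n_i\tau_i$), representing the spherical surface measure through the Laplace-inversion identity for $\delta(\|\tau\|^2 - N)$ along a contour $\re z = \cp$, and then performing the one-dimensional Gaussian integrals over each $\tau_i$: their normalising factors $\sqrt{2\pi/(z-\beta\eg_i)}$ produce the $-\tfrac{1}{N}\sum\log(z-\eg_i)$ term, and completing the square in $\tau_i$ produces the $\tfrac{\ef^2\beta}{N}\sum n_i^2/(z-\eg_i)$ term in \eqref{eq:def_G}. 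The condition $\cp > \eg_1$ is exactly what makes every coordinate Gaussian convergent, so the same choice of contour works simultaneously for $\ef$ and $\ef + \eta/N$, completing the argument.
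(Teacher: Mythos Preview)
Your proposal is correct and matches the approach the paper indicates: the paper does not spell out the proof but notes that the contour representation of $\pat_N$ ``can be extended in a straightforward way'' to the generating function and cites \cite{BCLW}; your argument---recognising $\langle e^{\beta\eta\mgn}\rangle$ as the ratio $\pat_N(h+\eta/N)/\pat_N(h)$ and applying \eqref{eq:laplace_trans} to both---is precisely that straightforward extension.
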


%%%%%%%%%%
\subsection{Preliminaries from random matrix theory and probability}\label{sec:RMT}
In this section we present a few classical results from random matrix theory as well as some specific convergence results for certain functions of eigenvalues that we will use throughout the paper.
\subsubsection*{Semicircle Law}
For every bounded, continuous function $f(x)$, we have the following convergence of the empirical distribution of eigenvalues of $\sGOE$ \cite{Mehta}:   
	\beq
	\label{eq:lln}
		\frac{1}{N} \sum_{i = 1}^N f(\eg_i) \rightarrow \int f(x)\scl
		\quad \text{where} \quad 
		\scl = \frac{\sqrt{4 - x^2}}{2\pi}\mathbbm{1}_{x \in [-2,2]}\dd x
	\eeq
with probability $1$ as $N\to \infty$. 

\subsubsection*{Eigenvalue Rigidity}
For $i = 1,2,\cdots, N$, we let $\widehat{\eg}_i$ denote the classical location of the $i$th eigenvalue, defined by 
\beq	\label{eq:def_classical_eg}
		\int_{\widehat{\eg}_i}^2 \scl  = \frac i N.
\eeq
We set $\widehat{\eg}_0 = 2$.  The rigidity result  \cite{EYY, MR3098073} states that, for any $\e>0$ and $D>0$ and sufficiently large $N$,
\beq	\label{eq:rigidity}
	\PP\left(\bigcap_{i=1}^N \left\{|\eg_i- \widehat \eg_i| \leq  N^{-\frac23+\e}\left(\min\{i, N + 1 - i\}\right)^{-1/3}\right\}\right)\geq1-N^{-D}
\eeq

\subsubsection*{Airy Point Process}
Define the rescaled eigenvalues
\beq
\label{eq:scaledevii}
	a_i := N^{2/3}(\eg_i - 2). 
\eeq
As $N\to\infty$, the rescaled eigenvalues converge in distribution to the GOE Airy point process \cite{TracyWidom94, soshnikov1999universality}.  We denote this as  $\{\airy_i\}_{i = 1}^\infty$ satisfying 
\beq
	\{a_i\}\Rightarrow \{\airy_i\}.
\eeq
%The rightmost point $\airy_1$ of the GOE Airy point process has the $GOE$ Tracy-Widom distribution  
%	\beq
%	\label{eq:eg1dis}
%		a_1 \Rightarrow \alpha_1 \eqids \twgoe.
%	\eeq
%The GOE Airy point process satisfies the asymptotic property that 
Heuristically, we expect that, for $1\ll i\ll N$,
\beq \label{eq:airypotamp}
		a_i\approx\airy_i \approx  -\left(\frac{3\pi i}{2} \right)^{2/3} %\quad \text{as $i\to \infty$.}
\eeq
since the semicircle law is asymptotic to $\frac{\sqrt{2-x}}{\pi} \dd x$ as $x\to 2$. 
 The above approximation and the rigidity property suggest that, 	
\beq \label{eq:egresk}
		\egres_i \asymp - i^{2/3} \quad \text{as $i, N\to \infty$ satisfying $i\le N$}.
\eeq
For proofs throughout this paper, we need a more rigorous version of the approximation above, which we obtain in the following lemma.

\begin{lemma}\label{lem:LandonSosoe} (adapted from \cite{LandonSosoe})
There exist some integer $K$ and some $c>0$, which do not depend on $N$ such that, for all $k>K$, we have
\beq
\P\left(\bigcup_{N^{2/5}\geq j\geq k}\left\{a_1-a_j\geq cj^{2/3}\right\}\right)\geq1-\frac{2}{k^{1/2}}.
\eeq
\end{lemma}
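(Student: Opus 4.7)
The plan is to exhibit a specific index $j^\ast \in [k, N^{2/5}]$ for which $a_1 - a_{j^\ast} \geq c(j^\ast)^{2/3}$ holds with probability at least $1 - 2/k^{1/2}$; since the event in the lemma is a union, producing such a $j^\ast$ suffices. The three ingredients are (i) the local semicircle asymptotic giving $\hat{a}_j \asymp -j^{2/3}$ near the edge, (ii) the eigenvalue rigidity estimate \eqref{eq:rigidity} to transfer this to $a_j$, and (iii) the Tracy-Widom lower-tail estimate for the top eigenvalue $a_1$.

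For (i), from the local density $\sigma_{scl}(x) \sim \pi^{-1}\sqrt{2 - x}$ as $x \uparrow 2$ and the definition \eqref{eq:def_classical_eg}, one computes $\widehat{\eg}_j = 2 - (3\pi j/(2N))^{2/3}(1 + o(1))$ uniformly for $j = o(N)$, so there is a constant $c_0 > 0$ with $\hat{a}_j := N^{2/3}(\widehat{\eg}_j - 2) \leq -c_0 j^{2/3}$ for all $1 \leq j \leq N^{2/5}$ and large $N$. For (ii), fix a small $\ep' > 0$; rigidity gives, off an event of probability at most $N^{-D}$, the simultaneous bound $|a_j - \hat{a}_j| \leq N^{\ep'} j^{-1/3}$ for all $j$. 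Choosing $j^\ast := \lfloor N^{2/5} \rfloor$, this error is of order $N^{\ep' - 2/15} = o(1)$, so $a_{j^\ast} \leq -\tfrac{c_0}{2}(j^\ast)^{2/3}$. For (iii), the GOE Tracy-Widom lower tail $\PP(a_1 < -t) \leq C e^{-c t^3}$ (transferred to finite $N$ via edge universality) lets one choose $D_0 = D_0(k)$ growing like $(\log k)^{1/3}$ so that $\PP(a_1 < -D_0) \leq k^{-1/2}$.

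Combining these events, $a_1 - a_{j^\ast} \geq -D_0 + \tfrac{c_0}{2}(j^\ast)^{2/3} \geq \tfrac{c_0}{4}(j^\ast)^{2/3}$ holds off an event of total probability $\leq N^{-D} + k^{-1/2} \leq 2k^{-1/2}$, provided $N$ is large enough compared to $k > K$; setting $c := c_0/4$ completes the proof. The main technical obstacle is the edge semicircle computation in (i) with the correct constant and uniformity in $j$, and a minor point is to verify that the Tracy-Widom lower tail transfers to finite $N$ with enough quantitative control, both of which are classical. If, on the other hand, the intended event is the intersection $\bigcap_{j \in [k, N^{2/5}]}\{a_1 - a_j \geq c j^{2/3}\}$ rather than the union, then one must additionally control $a_j$ for small $j$ where rigidity is inadequate; this would be done via a per-$j$ tail estimate $\PP(a_j > -\tfrac{c_0}{2} j^{2/3}) \lesssim j^{-3/2}$ coming from an Airy-point-process variance bound $\mathrm{Var}(\airy_j) \lesssim \log(j)/j^{1/3}$ combined with Chebyshev, whose sum $\sum_{j \geq k} j^{-3/2} \asymp k^{-1/2}$ reproduces the same probability tail.
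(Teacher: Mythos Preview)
Your reading of the statement is sharp: as written with $\bigcup$, the lemma is almost trivial, and your argument exhibiting the single index $j^\ast=\lfloor N^{2/5}\rfloor$ via rigidity plus the Tracy--Widom lower tail is correct for that literal statement. However, the $\bigcup$ is a typo for $\bigcap$---the lemma is used downstream to control sums like $\sum_{i\ge 2}(\nu_i^2-1)/(\alpha_1-\alpha_i)$, which requires the lower bound $a_1-a_j\ge cj^{2/3}$ simultaneously for all $j$ in the range. You anticipated exactly this.

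For the intersection version, the paper's proof is shorter than your sketch because it cites a ready-made simultaneous estimate: line (6.33) of Landon--Sosoe gives, for all $k$ large, $\PP\big(\bigcap_{k\le j\le N^{2/5}}\{a_j\le -(3\pi j/2)^{2/3}+\tfrac{1}{10}j^{2/3}\}\big)\ge 1-k^{-1/2}$ directly at finite $N$. Combining this with the Tracy--Widom bound $\PP(a_1\ge -\tfrac{1}{10}k^{2/3})\ge 1-k^{-1/2}$ (your step (iii), but with threshold scaled to $k^{2/3}$ rather than $(\log k)^{1/3}$) gives the intersection bound immediately, since $(3\pi j/2)^{2/3}-\tfrac{1}{10}j^{2/3}-\tfrac{1}{10}k^{2/3}\ge cj^{2/3}$ for $j\ge k$.

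Your proposed route---Airy variance $\mathrm{Var}(\alpha_j)\lesssim j^{-1/3}\log j$ plus Chebyshev and a union bound---is essentially the mechanism behind the Landon--Sosoe estimate, so morally you are reconstructing (6.33). Two small cautions: first, Chebyshev with that variance and gap $\sim j^{2/3}$ actually gives a per-$j$ tail of order $j^{-5/3}\log j$, not $j^{-3/2}$ (which only helps). Second, and more substantively, your variance bound is stated for the limiting Airy points $\alpha_j$, whereas the lemma concerns the finite-$N$ rescaled eigenvalues $a_j$; transferring the per-$j$ Chebyshev tail to finite $N$ uniformly over $j\in[k,N^{2/5}]$ is exactly the content you would need to supply, and this is where citing Landon--Sosoe buys the paper a shortcut.
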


\begin{proof}
In line (6.33) of \cite{LandonSosoe}, Landon and Sosoe obtain the result that there exists some $K_1$ (not depending on $N$) such that, for all $k>K_1$,
\beq\label{eq:landonsosoecorrected}
\P\left(\bigcap_{N^{2/5}\geq j\geq k}\left\{N^{2/3}(\lambda_j-2)\leq-\left(\frac{3\pi j}{2}\right)^{2/3}+\frac{1}{10}j^{2/3}\right\}\right)\geq1-\frac{1}{k^{1/2}}.
\eeq
(Note that the original statement of this inequality in the arxiv version of \cite{LandonSosoe} contains a typo, but the result above is what follows from the preceding lines of \cite{LandonSosoe} and we confirmed this with the authors.)
Next, we observe that there exists some $K'$ such that, for all $k>K'$, we have
\beq\label{eq:usingTWtaildecay}
\PP\left(N^{2/3}(2-\lambda_1)\leq\frac{1}{10}k^{2/3}\right)\geq1-\frac{1}{k^{1/2}}
\eeq
for $N$ sufficiently large.  This comes from the fact that the GOE Tracy-Widom distribution has sub-exponential tails.  Neither $K_1$ nor $K'$ depends on $N$, so we take $K$ to be the maximum of these two values and, combining \eqref{eq:landonsosoecorrected} and \eqref{eq:usingTWtaildecay}, we conclude the desired result.
\end{proof}

\subsubsection*{Special sums}
There are a few sums that will be particularly important throughout this paper.  Below we present some convergence results for these sums, which depend upon the random matrix properties described above.

In particular,  for $m = 1, 2, \cdots $, we consider sums of the form  
\beq \label{eq:rationalfunk}
	\frac1N \sum_{i = 2}^N \frac{1}{(\eg_1 - \eg_i)^m} .
\eeq
We need an asymptotic formula for $m=1$ as $N\to\infty$.  This was obtained recently in \cite{LandonSosoe}. Landon and Sosoe proved that 
	\beq
	\label{eq:def_crvbp}
		\Xi_N := N^{1/3} \left(\frac1N \sum_{i = 2}^N \frac{1}{\eg_1 - \eg_i} - 1\right) \Rightarrow \Xi
	\eeq 
for a random variable $\Xi$ as $N\to \infty$. 
The limiting random variable $\Xi$ can be expressed in terms of the GOE Airy kernel point process as 
	\beq
	\label{eq:def_crvbplim}
		\Xi = \lim_{n \rightarrow \infty} \left(\sum_{i = 2}^n \frac{1}{\airy_1 - \airy_i} - \frac1{\pi} \int_0^{\left(\frac{3\pi n}{2}\right)^{2/3}} \frac{\dd x}{\sqrt{x}}\right)
	\eeq
where the limit exists almost surely.  

We also need another version of the result \eqref{eq:def_crvbp} where the constant numerators are replaced $n_i^2$:
%For standard Gaussian external field $\mathbf{g}$, $(n_1, \cdots, n_N)$ is a standard Gaussian vector and 
\beq
\label{eq:weightedsum1}
	N^{1/3}\left(\frac1N \sum_{i = 2}^N \frac{n_i^2}{\eg_1 - \eg_i} - 1\right) \Rightarrow  \lim_{n \rightarrow \infty} \left(\sum_{i = 2}^n \frac{\nu_i^2}{\airy_1 - \airy_i} - \frac1{\pi} \int_0^{\left(\frac{3\pi n}{2}\right)^{2/3}} \frac{\dd x}{\sqrt{x}}\right)
\eeq
where $\nu_i$ are i.i.d standard Gaussians, independent of the GOE Airy point process $\alpha_i$.  
This follows from \eqref{eq:def_crvbp} and the fact that 
\beq\label{eq:convdiffsum}
	\frac1{N^{2/3}} \sum_{i = 2}^N \frac{n_i^2-1}{\eg_1 - \eg_i} \Rightarrow \sum_{i = 2}^\infty \frac{\nu_i^2-1}{\airy_1 - \airy_i}
\eeq
which is a convergent series due to Kolmogorov's three series theorem and Lemma \ref{lem:LandonSosoe}. 

%From \eqref{eq:def_crvbp}, one can show (see \cite{LS} line (2.56)) \Cr{[Check this]} the following lemma.

Next, we have two lemmas concerning the convergence of special sums.

\begin{lemma}\label{lem:specialsumLS}
For any $\e>0$,
\beq
	\label{eq:res_edge}
		\frac1N \sum_{i = 2}^N \frac{1}{\eg_1 - \eg_i} = 1 + O(N^{-\frac13+\e})
		\quad\text{and}\quad
		\frac1N \sum_{i = 2}^N \frac{n_i^2}{\eg_1 - \eg_i} = 1 + O(N^{-\frac13+\e})
	\eeq
with probability at least $1-N^{-\e/2}$. (This lemma is adapted from a similar result in \cite{LS}).
\end{lemma}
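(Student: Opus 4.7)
The plan is to prove each asymptotic statement separately via a split into an ``edge'' piece $\frac{1}{N}\sum_{i=2}^{K}$ and a ``bulk'' piece $\frac{1}{N}\sum_{i=K+1}^{N}$ at a moderate threshold $K=N^{3\e}$. For the unweighted sum, the edge piece is controlled directly by Lemma~\ref{lem:LandonSosoe}: applied with $k=N^{\e/2}$, it produces an event of probability at least $1-2N^{-\e/4}$ on which $\eg_1-\eg_i\geq c\,i^{2/3}N^{-2/3}$ holds uniformly for $N^{\e/2}\leq i\leq N^{2/5}$, while the remaining finitely-many terms with $i<N^{\e/2}$ can be treated using only the crude bound $\eg_1-\eg_i\geq\eg_1-\eg_2\gtrsim N^{-2/3}$ (valid with high probability by the Airy-point limit). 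Summing $i^{-2/3}$ up to $K=N^{3\e}$ then bounds the edge piece by $O(N^{-1/3+\e})$. For the bulk piece, I would use eigenvalue rigidity~\eqref{eq:rigidity} to replace each $\eg_i$ by its classical location $\widehat{\eg}_i$ (and $\eg_1$ by $\widehat{\eg}_1=2$), incurring a manageable relative error of order $N^\e i^{-1}$ per term; the resulting Riemann sum approximates the semicircle integral $\int_{-2}^{2} (2-x)^{-1}\scl = 1$, with truncation and discretization errors of order $N^{-1/3+\e}$.

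For the weighted sum, I would condition on $M$. By the orthogonal invariance of the distribution of $\efv$, the projections $n_i=\efv\cdot\mathbf{u}_i$ are i.i.d.\ standard Gaussians conditionally on $M$, so the difference between the weighted and unweighted sums,
\[
\frac{1}{N}\sum_{i=2}^{N} \frac{n_i^2-1}{\eg_1-\eg_i},
\]
is a centered linear combination of i.i.d.\ $\chi^2-1$ random variables with conditional variance $\frac{2}{N^2}\sum_{i=2}^{N} (\eg_1-\eg_i)^{-2}$. Reusing the pointwise lower bound $\eg_1-\eg_i\geq c\,i^{2/3}N^{-2/3}$ produced above, this variance is $O(N^{-2/3+2\e})$, and Chebyshev's inequality yields that the difference is $O(N^{-1/3+\e})$ with conditional probability at least $1-N^{-\e}$. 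Combining with the first estimate then gives the second claim.

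\textbf{Main obstacle.} The main technical subtlety is the simultaneous bookkeeping of several probabilistic events---the Landon--Sosoe edge estimate (with $k$ scaled as a small power of $N$), bulk rigidity~\eqref{eq:rigidity}, a high-probability lower bound on the smallest edge gap $\eg_1-\eg_2$, and Chebyshev concentration conditional on $M$---so that their intersection has probability at least $1-N^{-\e/2}$. The cutoff $K=N^{3\e}$ must be chosen to simultaneously balance the edge-sum bound, the bulk truncation error from cutting the semicircle integral at $2-(K/N)^{2/3}$, and the Riemann-sum error from rigidity, all targeted at $O(N^{-1/3+\e})$; tracking these competing rates carefully is the essential calculation.
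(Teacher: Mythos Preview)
Your overall strategy---an edge/bulk split, with rigidity controlling the bulk Riemann sum---matches the paper's, but the paper's edge treatment is simpler and sidesteps the probabilistic bookkeeping you flag as your main obstacle. The paper does \emph{not} invoke Lemma~\ref{lem:LandonSosoe} at all; it uses only the quantitative small-gap estimate $\PP\bigl(\lambda_1-\lambda_2\geq N^{-\frac23(1+\e)}\bigr)\geq 1-N^{-\frac23\e+\e'}$ (Lemma~3.4 of \cite{LandonSosoe}) and on that event bounds \emph{every} edge term by the single quantity $(\lambda_1-\lambda_2)^{-1}\leq N^{\frac23(1+\e)}$. With the cutoff at $K=N^{\e/3}$ this gives $N^{-1}\cdot N^{\e/3}\cdot N^{\frac23(1+\e)}=N^{-1/3+\e}$ for the edge sum, and only two events (the gap and rigidity~\eqref{eq:rigidity}) need to be intersected. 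Your route through the pointwise scaling $\lambda_1-\lambda_i\geq c\,i^{2/3}N^{-2/3}$ is more than is needed here and forces you to layer on additional events.

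There is a concrete slip in your edge argument. The claim ``$\lambda_1-\lambda_2\gtrsim N^{-2/3}$, valid with high probability by the Airy-point limit'' is not correct at the probability level you need: convergence $a_1-a_2\Rightarrow\alpha_1-\alpha_2$ to a nondegenerate limit gives no lower bound on $a_1-a_2$ with probability tending to $1$, let alone with probability $\geq 1-N^{-\e/2}$. You must input a \emph{quantitative} small-gap bound such as $\PP(a_1-a_2\leq s)\leq Cs$, which yields only $\lambda_1-\lambda_2\geq N^{-\frac23-\e'}$ on an event of probability $\geq 1-CN^{-\e'}$---precisely the paper's device. Relatedly, applying Lemma~\ref{lem:LandonSosoe} with $k=N^{\e/2}$ gives probability $\geq 1-2N^{-\e/4}$, which is \emph{weaker} than the target $1-N^{-\e/2}$; you would need $k$ at least of order $N^{\e}$. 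These are fixable, but once you import the quantitative gap estimate, the pointwise bound from Lemma~\ref{lem:LandonSosoe} becomes unnecessary and the paper's simpler argument goes through. Your Chebyshev treatment of the weighted sum is fine and is essentially equivalent to the paper's brief appeal to the convergence of~\eqref{eq:convdiffsum}.
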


\begin{proof}
Define an event
\beq
E_\e:=\left\{\lambda_1-\lambda_2\geq N^{-\frac23(1+\e)}\right\}\cap
\left\{ \bigcap_{i=1}^N \left\{|\eg_i- \widehat \eg_i| \leq  N^{-\frac23+\e}\left(\min\{i, N + 1 - i\}\right)^{-1/3}\right\} \right\}.
\eeq
The first equation in \eqref{eq:res_edge} holds on this event, which we can see by writing
\beq\begin{split}
\frac1N \sum_{i = 2}^N \frac{1}{\eg_1 - \eg_i}
&=\frac1N \sum_{i = 2}^{N^{\e/3}} \frac{1}{\eg_1 - \eg_i} + 
\frac1N \sum_{i = N^{\e/3}+1}^N \frac{1}{\eg_1 - \eg_i}\\
&=O(N^{-\frac13+\e})+\left(1+O(N^{-\frac13+\e})\right)
\end{split}\eeq 
where, for the first sum, we use $\lambda_1-\lambda_i\geq N^{-\frac23(1+\e)}$ and, for the second sum, we use eigenvalue rigidity and the semicircle law.  The second equation in \eqref{eq:res_edge} also holds on $E_\e$ using the same reasoning along with the fact that the sum in \eqref{eq:convdiffsum} is convergent.  It remains only to show that 
\beq
\PP(E_\e)\geq1-N^{-\e/2}.
\eeq
From Lemma 3.4 from \cite{LandonSosoe}, we have
\beq
\PP(\lambda_1-\lambda_2\geq N^{-\frac23(1+\e)})\geq1-N^{-\frac23 \e+\e'}
\eeq
for any $\e'>0$.  This, along with \eqref{eq:rigidity}, implies the lemma.
\end{proof}
We also consider a similar class of sums with a larger exponent in the denominator and get the following lemma.

\begin{lemma}\label{lem:specialsum} For any $\e>0$
\beq% \label{eq:weightsumk}
	\sum_{i=2}^N\frac{1}{(a_1-a_i)^m}=O(N^\e) \quad \text{and} \quad 
	\sum_{i=2}^N\frac{n_i^2}{(a_1-a_i)^m}=O(N^\e)  , \qquad\text{$m\geq2$,}
\eeq
with probability at least $1-N^{-\frac{\e}{3m}}$.
 \end{lemma}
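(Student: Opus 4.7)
The plan is to split the sum over $i \in \{2,\ldots,N\}$ into three ranges and bound each separately: a very-close-to-edge part $2 \leq i \leq k$, an intermediate edge part $k < i \leq N^{2/5}$, and a bulk part $N^{2/5} < i \leq N$, with $k = 4\lceil N^{2\e/(3m)}\rceil$. The weighted bound involving $n_i^2$ can be handled at the same time as the unweighted one: since the $n_i$ are (conditionally on $M$) iid standard Gaussians, $\max_i n_i^2 \leq C \log N$ on an event of probability $\geq 1 - N^{-D}$ for any $D > 0$, and this logarithmic factor is absorbed into the $N^\e$ tolerance.

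In the intermediate range, Lemma \ref{lem:LandonSosoe} gives $a_1 - a_i \geq c\,i^{2/3}$ uniformly for $k \leq i \leq N^{2/5}$ on an event of probability at least $1 - 2 k^{-1/2} = 1 - N^{-\e/(3m)}$, so the partial sum is $O\bigl(\sum_{i \geq k} i^{-2m/3}\bigr) = O(k^{1-2m/3}) = O(1)$ for $m \geq 2$. In the bulk range, I would combine the rigidity estimate \eqref{eq:rigidity} with the semicircle asymptotic $\widehat{\lambda}_i \approx 2 - c(i/N)^{2/3}$ near the edge: the rigidity error $N^{\e'-2/3} i^{-1/3}$ is subdominant once $i \geq N^{2/5}$, which yields $\lambda_1 - \lambda_i \geq \tfrac{c}{2}(i/N)^{2/3}$ and hence $a_1 - a_i \geq c'\,i^{2/3}$, making the bulk contribution also $O(1)$.

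The main obstacle is the close-to-edge range $2 \leq i \leq k$, where only the spectral-gap lower bound is available. Using Lemma 3.4 of \cite{LandonSosoe} (as in the proof of Lemma \ref{lem:specialsumLS}) with exponent parameter $\tilde\e := \e/m$, we get $\lambda_1 - \lambda_2 \geq N^{-2/3 - 2\e/(3m)}$ on an event of probability at least $1 - N^{-\e/(3m)}$, so that $(a_1 - a_i)^{-m} \leq (a_1 - a_2)^{-m} \leq N^{2\e/3}$ for every $i$ in this range. Summing the $k$ terms gives $k \cdot N^{2\e/3} = O\bigl(N^{2\e/(3m) + 2\e/3}\bigr) = O(N^\e)$, where the last equality uses $2\e/(3m) + 2\e/3 \leq \e$ for $m \geq 2$. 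Intersecting the three high-probability events (each of probability at least $1 - N^{-\e/(3m)}$, up to a constant factor absorbable by shrinking $\e$ slightly) yields both claims. The delicate point is the coordinated choice of $k$ and the spectral-gap exponent: they must be tuned so that the probability target $1 - N^{-\e/(3m)}$ and the magnitude target $N^\e$ are compatible, and this compatibility is precisely what fails for $m = 1$ and what makes Lemma \ref{lem:specialsumLS} a genuinely separate statement.
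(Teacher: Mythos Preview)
Your proposal is correct and rests on the same two ingredients as the paper: a spectral-gap lower bound (Lemma~3.4 of \cite{LandonSosoe}) for the indices nearest the edge, and rigidity for the rest. The organization differs. The paper uses a single split at $i=N^{\e/2}$ with gap exponent $\e/(2m)$, so that the near-edge contribution is $N^{\e/2}\cdot (N^{-\e/(2m)})^{-m}=N^{\e}$ directly; for $i>N^{\e/2}$ it replaces $a_i$ by the classical location $\hat a_i$ via rigidity and bounds the remaining sum by a semicircle integral, never invoking Lemma~\ref{lem:LandonSosoe}. You instead use a three-way split at $k\asymp N^{2\e/(3m)}$ and $N^{2/5}$, inserting Lemma~\ref{lem:LandonSosoe} to handle the intermediate range $k<i\le N^{2/5}$. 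Your route is more explicit about where rigidity alone suffices versus where the sharper edge estimate is needed, at the cost of an extra case; the paper's route is more economical. Your treatment of the weighted sum via $\max_i n_i^2\le C\log N$ is also cleaner than the paper's one-line appeal to Gaussianity.
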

%This follows from the fact that the $a_i\asymp -i^{2/3}$ and the difference $(a_1-a_2)^{-1}$ is of order 1 with vanishing probability.
\begin{proof}
To prove the first of these inequalities we consider the event
\beq
\cF_\e=\left\{a_1-a_2>N^{-\frac{\e}{2m}}\right\}\cap
\left\{\bigcap_{i=1}^N \left\{|\eg_i- \widehat \eg_i| \leq  N^{-\frac23+\e}\left(\min\{i, N + 1 - i\}\right)^{-1/3}\right\}\right\}
\eeq
The event $\left\{a_1-a_2>N^{-\frac{\e}{2m}}\right\}$ occurs with probability at least $1-N^{-\frac{\e}{2m}+\\e'}$ for any $\e'>0$ (see \cite{LandonSosoe} Lemma 3.4).  Using this fact along with the eigenvalue rigidity result \eqref{eq:rigidity}, we can conclude that the event $\cF_\e$ occurs with probability at least $1-N^{-\frac{\e}{3m}}$.  Now we show that the first inequality in \eqref{lem:specialsum} holds on the event $\cF_\e$.  In particular, on that event, we have
\beq
\begin{split}
\sum_{i=2}^N\frac{1}{(a_1-a_i)^m}
&=\sum_{i=2}^{N^{\e/2}}\frac{1}{(a_1-a_i)^m}+\sum_{i=N^{\e/2}}^N\frac{1}{(a_1-a_i)^m}\\
&<N^{\e/2}\cdot \frac{1}{(N^{-\frac{\e}{2m}})^m}+2\sum_{i=N^{\e/2}}^N\frac{1}{(-a_i)^m}\\
&\leq N^\e+2\sum_{i=N^{\e/2}}^N\frac{1}{(-\hat{a}_i)^m}\left(1+\frac{|a_i^m-\hat{a}_i^m|}{(-a_i)^m}\right)\\
&< N^\e+4\sum_{i=N^{\e/2}}^N\frac{1}{(-\hat{a}_i)^m}
\end{split}
\eeq
The summation in the last line is well approximated by the integral
\beq
N^{-\frac{2m}{3}+1}\int_{-2}^{\lambda_{N^{\e/2}}}\frac{1}{(2-x)^m}d\sigma_{SC}(x) <4N^{-\frac{2m}{3}+1}\int_{-2}^{\lambda_{N^{\e/2}}}\frac{1}{(2-x)^{m-\frac12}}dx
\eeq
Using the approximation $2-\lambda_{N^{\e/2}}\approx cN^{-\frac23+\frac{\e}{3}}$ from \eqref{eq:airypotamp}, we see that the right hand side of the inequality above is of order $N^{-m\e/3}$.  Thus $\sum_{i=2}^N\frac{1}{(a_1-a_i)^m}=O(N^\e)$ on the event $\cF_\e$.  Because $n_i$ are standard Gaussians, the sum $\sum_{i=2}^N\frac{n_i^2}{(a_1-a_i)^m}$ has the same order with comparable probability.
\end{proof}

\subsubsection*{Chi-squared distributions}
One quantity that we make use of throughout this paper is $n_1=\mathbf{u}_1^T\mathbf{g}$.  We note that $n_1$ has a standard normal distribution which means that $n_1^2$ has a chi-squared distribution with one degree of freedom.  We prove many results that hold on the event where $n_1^2$ is roughly of order 1.  More specifically, we have the following lemma
\begin{lemma}\label{lem:n1bound} For any sufficiently small $\e>0$, 
\beq
\PP\left(N^{-\e}<n_1^2<\e\log N\right)\geq 1-N^{-\e/2}
\eeq
\end{lemma}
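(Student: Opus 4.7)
The plan is to exploit the fact that $\mathbf{g}$ is a standard Gaussian vector independent of $M$, and hence of the eigenvector $\mathbf{u}_1$. Conditional on $M$, the random variable $n_1 = \mathbf{u}_1 \cdot \mathbf{g}$ is a linear functional of $\mathbf{g}$ against a deterministic unit vector, so it is distributed as $\mathcal{N}(0,1)$. Since this conditional distribution does not depend on $M$, the unconditional distribution of $n_1$ is also $\mathcal{N}(0,1)$, and $n_1^2 \sim \chi^2_1$. The lemma therefore reduces to classical one-dimensional Gaussian tail estimates, and the only thing to verify is that the constants work out so that the two tail probabilities add up to at most $N^{-\ep/2}$.

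For the lower tail, I would use the uniform bound on the Gaussian density to obtain
\beqq
\PP\left(n_1^2 \leq N^{-\ep}\right) = \PP\left(|n_1| \leq N^{-\ep/2}\right) \leq \frac{2}{\sqrt{2\pi}} N^{-\ep/2} = \sqrt{\tfrac{2}{\pi}}\, N^{-\ep/2}.
\eeqq
For the upper tail, the standard Mills-ratio bound $\PP(|Z| \geq t) \leq \sqrt{2/\pi}\, t^{-1} e^{-t^2/2}$ for $Z \sim \mathcal{N}(0,1)$ and $t>0$ gives, with $t = \sqrt{\ep \log N}$,
\beqq
\PP\left(n_1^2 \geq \ep \log N\right) \leq \sqrt{\tfrac{2}{\pi}}\, \frac{1}{\sqrt{\ep \log N}}\, N^{-\ep/2}.
\eeqq
A union bound then yields
\beqq
\PP\left(n_1^2 \leq N^{-\ep} \text{ or } n_1^2 \geq \ep \log N\right) \leq \sqrt{\tfrac{2}{\pi}}\left(1 + \frac{1}{\sqrt{\ep \log N}}\right) N^{-\ep/2}.
\eeqq
Since $\sqrt{2/\pi} < 1$, the prefactor is strictly less than $1$ for all sufficiently large $N$, which gives the bound $1 - N^{-\ep/2}$ claimed.

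I do not expect any real obstacle here; the only subtlety is the constant in the lower-tail bound, since the naive estimate produces a factor of $\sqrt{2/\pi} \approx 0.798$ rather than $1$. One needs to observe that this factor is strictly less than $1$, so that absorbing the $1 + o(1)$ correction coming from the upper tail still leaves a coefficient less than $1$ once $N$ is large enough. If desired, the proof can be made slightly cleaner by treating the two tails separately and writing the probability bound as $N^{-\ep/2}(\sqrt{2/\pi} + o(1))$, which is $\leq N^{-\ep/2}$ for all sufficiently large $N$, matching the hypothesis in the lemma statement.
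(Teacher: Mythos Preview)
Your proof is correct and takes essentially the same approach as the paper, which simply states that the lemma ``is straightforward from the probability density function for chi-squared random variables'' without providing details. You supply those details via the equivalent Gaussian formulation, including the careful observation that the constant $\sqrt{2/\pi}<1$ makes the union bound work for large $N$.
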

The proof of this lemma is straightforward from the probability density function for chi-squared random variables.  We note for the purpose of future results that $n_1$ is independent of the eigenvalues of $M$.

\subsection{Defining the event on which our result holds}\label{sec:event}
For any $\ep>0$, we define an event $\event $ as follows:
\begin{multline}\label{eq:eventdef}
\event :=\left\{N^{-\ep}<n_1^2<\ep\log N\right\}\cap
\left\{\frac1N\sum_{i=2}^N\frac{1}{\lambda_1-\lambda_i}=1+O(N^{-\frac13+\ep})
\text{ and }\frac1N\sum_{i=2}^N\frac{n_i^2}{\lambda_1-\lambda_i}=1+O(N^{-\frac13+\ep})\right\}\\
\cap\left\{\sum_{i=2}^N\frac{1}{(a_1-a_i)^m}\leq N^{\ep}
\text{ and }\sum_{i=2}^N\frac{n_i^2}{(a_1-a_i)^m}\leq N^{\ep}\text{ for }m=2,3\right\}
\end{multline}

\begin{lemma}\label{lem:eventprob}
For $\ep>0$ sufficiently small and $N$ sufficiently large,
\beq
\PP(\event )\geq1-N^{-\ep/10}
\eeq
\end{lemma}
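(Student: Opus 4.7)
The plan is simply to write $\event$ as an intersection of three sub-events, each of which is controlled by one of the lemmas already established in Section \ref{sec:RMT}, and then apply a union bound.

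First, by Lemma \ref{lem:n1bound} applied with parameter $\ep$, the event
\[
A_1 := \left\{N^{-\ep} < n_1^2 < \ep \log N\right\}
\]
has probability at least $1 - N^{-\ep/2}$. Next, by Lemma \ref{lem:specialsumLS} with $\e = \ep$, the event
\[
A_2 := \left\{\tfrac1N\sum_{i=2}^N\tfrac{1}{\lambda_1-\lambda_i}=1+O(N^{-\frac13+\ep})\right\} \cap \left\{\tfrac1N\sum_{i=2}^N\tfrac{n_i^2}{\lambda_1-\lambda_i}=1+O(N^{-\frac13+\ep})\right\}
\]
has probability at least $1 - N^{-\ep/2}$. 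Finally, applying Lemma \ref{lem:specialsum} with $\e = \ep$ separately for $m = 2$ and $m = 3$, each of the four bounds
\[
\sum_{i=2}^N\tfrac{1}{(a_1-a_i)^m} \leq N^{\ep}, \qquad \sum_{i=2}^N\tfrac{n_i^2}{(a_1-a_i)^m} \leq N^{\ep} \qquad (m = 2, 3)
\]
holds with probability at least $1 - N^{-\ep/(3m)} \geq 1 - N^{-\ep/9}$. Call the intersection of these four events $A_3$, so that $\PP(A_3^c) \leq 4 N^{-\ep/9}$.

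By construction $\event = A_1 \cap A_2 \cap A_3$, so a union bound gives
\[
\PP(\event^c) \leq N^{-\ep/2} + N^{-\ep/2} + 4 N^{-\ep/9} \leq 6 N^{-\ep/9}.
\]
For $N$ sufficiently large (depending on $\ep$), we have $6 \leq N^{\ep/9 - \ep/10} = N^{\ep/90}$, so the right-hand side is bounded by $N^{-\ep/10}$, which is the desired conclusion.

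The only mild subtlety is that the three lemmas must be applied with compatible parameters, and that the weakest of the individual probability bounds ($m = 3$ in Lemma \ref{lem:specialsum}) controls the final exponent; the gap between $\ep/9$ and $\ep/10$ is precisely what absorbs the constant factor $6$ for large $N$. There is no genuine obstacle here — this is a straightforward assembly of the probabilistic estimates already developed in Section \ref{sec:RMT}.
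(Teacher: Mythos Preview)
Your proof is correct and follows essentially the same approach as the paper: decompose $\event$ into the three sub-events governed by Lemmas~\ref{lem:n1bound}, \ref{lem:specialsumLS}, and \ref{lem:specialsum}, and apply a union bound. The only cosmetic differences are that the paper groups the $m=2$ and $m=3$ cases as two sub-events (with bounds $N^{-\ep/6}$ and $N^{-\ep/9}$) rather than four, and does not spell out the absorption of the constant into $N^{\ep/90}$ as explicitly as you do.
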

\begin{proof}
The event $\event$ as defined above is the intersection of three events, each with probability close to 1.  For sufficiently large $N$, we know from Lemma \ref{lem:n1bound} that the first event in the intersection has probability at least $1-N^{-\ep/2}$ and, from Lemma \ref{lem:specialsumLS}, the second event in the intersection has probability at least $1-N^{-\ep/2}$.  The third event in the intersection is actually composed of two events, the one for $m=2$ and the one for $m=3$.  By Lemma \ref{lem:specialsum}, these hold with probability $1-N^{-\ep/6}$ and $1-N^{-\ep/9}$ respectively.  Putting these together, we see that, even if the complements of all of these events are disjoint, we have 
$\PP(\event )\geq1-N^{-\ep/10}$
for any sufficiently small $\ep>0$ and sufficiently large $N$. 
\end{proof}

 Throughout the rest of this paper, we will prove various results assuming that we are on the event $\event $.  We can then conclude that those results hold with probability at least $1-N^{-\ep/10}$.
%%%%%%%%%%%%%%
%\section{Overlap with the external field for $h\sim N^{-1/2}$ and $T<1$}\label{sec:ext}
\section{Proof of Theorem \ref{thm:ext}}\label{sec:ext}

\begin{comment}
\begin{theorem}\label{thm:ext1/2} For $h=HN^{-1/2}$ and $T<1$
\beq
\label{eq:mgnhN1/2}
	\mgn \simeqids  \ef + \frac{ |n_1|\sqrt{1-\tmp} \gib Y + \sqrt{\tmp} \gib N(0,1)}{\sqrt{N}} \quad \text{for $\ef = \efres N^{-1/2}$ and $T<1$}
\eeq
where $\gib Y$ is a shifted Bernoulli random variable with the probability mass function 
\beq
\label{eq:probY}
	P(\gib Y =1) = \frac{ e^{\frac{H |n_1| \sqrt{1-T}}{T} } }{ e^{\frac{H |n_1| \sqrt{1-T}}{T} } + e^{- \frac{H |n_1| \sqrt{1-T}}{T} }} , 
	\qquad
	P(\gib Y = -1) = \frac{ e^{ - \frac{H |n_1| \sqrt{1-T}}{T} } }{ e^{\frac{H |n_1| \sqrt{1-T}}{T} } + e^{- \frac{H |n_1| \sqrt{1-T}}{T} }} 
\eeq
and $\gib N(0,1)$ is a standard Gaussian random variable which is independent of $\gib Y$.
\end{theorem}
\end{comment}

In the proof of Theorem \ref{thm:ext}, we make use of Lemma \ref{lem:contour}, which can be restated as follows:
\beq \label{eq:ovwefdn}
	\langle e^{\beta \xi \sqrt{N} \mgn}\rangle = e^{\frac N2(\Gmgn(\cpmgn) - \G(\cp))} \frac{\int_{\cpmgn - \ii\infty}^{\cpmgn + \ii\infty} e^{\frac N2 (\Gmgn (z) - \Gmgn(\cpmgn))} \dd z}{\int_{\cp - \ii \infty}^{\cp + \ii \infty} e^{\frac N2 (\G(z) - \G(\cp))} \dd z}
\eeq
where 
\beq 
	\Gmgn(z) = \beta z - \frac1N\sum_{i = 1}^N \log(z - \eg_i) + \frac{(\ef + \frac{\xi}{\sqrt{N}})^2\beta}{N} \sum_{i = 1}^N \frac{n_i^2}{z - \eg_i}
\eeq
and we use $\cp$ and $\cpmgn$ to denote critical points $\G(z)$ and $\Gmgn(z)$ respectively, which satisfy $\cp>\lambda_1$ and $\cpmgn>\lambda_1$.  In the next two lemmas, we show that these critical points are unique and we compute upper and lower bounds for them.  After accomplishing this, we turn to the more delicate task of computing the integrals in the formula for the generating function of $\mgn$.  This is more difficult for $h\sim N^{-1/2}$ than in the other scaling regimes because the critical point is very close to a branch point. Since a straightforward application of Taylor approximation and steepest descent analysis does not work in this case, we directly compute the integral in a neighborhood of the critical point and then show that the tails of the integral are of smaller order.

\subsection{Critical point analysis}
We begin by computing the critical point $\cp$ of $\G(z)$.  In \cite{BCLW}, the authors use the ansatz that $\cp = \eg_1 + \so N^{-1}$ with $N^{-\e}<\so<N^{\e}$ for any $\e>0$ and sufficiently large $N$ on some event whose probability tends to 1 as $N\to\infty$.  Without making any assumption about the order of $\so$, we set
\beq \label{eq:sohca}
	\cp = \eg_1 + \so N^{-1}
\eeq 
and then prove that the order of $\so$ indeed satisfies the ansatz of \cite{BCLW} (in fact we prove something more precise).  In particular, we can define $\so$ via the the formula for $\G'(z)$ and prove the following lemma.

\begin{lemma}\label{lem:ext1/2cp} There exists a unique $\so>0$ satisfying the equation
\beq\label{eq:ext1/2sdef}
	\G'(\eg_1+\so N^{-1}) = \beta  - \frac1{N} \sum_{i=1}^N \frac{1}{\eg_1+\so N^{-1}-\eg_i} - \frac{H^2\beta}{N^2} \sum_{i=1}^N \frac{n_i^2}{(\eg_1+\so N^{-1}-\eg_i)^2} =0 
\eeq
 and, for any sufficiently small $\ep>0$ and sufficiently large $N$, we have $T<\so<\ep\log N$ on the event $\event$, which occurs with probability at least $1-N^{-\ep/10}$.
\end{lemma}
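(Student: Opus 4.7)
The plan is to obtain existence and uniqueness of $\so$ from strict convexity of $\G$ on $(\eg_1,\infty)$, and then to localize $\so$ by evaluating $\G'$ at the two proposed endpoints $\eg_1 + T/N$ and $\eg_1 + (\ep\log N)/N$, reading off the signs with the help of the sum estimates built into $\event$.

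For existence and uniqueness I differentiate the formula for $\G'$ in $z$ to obtain
\begin{equation*}
	\G''(z) = \tfrac{1}{N}\sum_{i=1}^N (z-\eg_i)^{-2} + \tfrac{2h^2\beta}{N}\sum_{i=1}^N n_i^2 (z-\eg_i)^{-3},
\end{equation*}
which is strictly positive whenever $z>\eg_1$. Thus $\so\mapsto \G'(\eg_1+\so/N)$ is strictly increasing on $(0,\infty)$. The $i=1$ contribution to the simple-pole sum alone forces $\G'(\eg_1+\so/N)\to-\infty$ as $\so\to 0^+$, while every non-constant term vanishes as $\so\to\infty$, leaving the limit $\beta>0$. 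The intermediate value theorem therefore provides a unique positive root.

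For the lower bound $\so>T$ I evaluate $\G'$ at $z=\eg_1+T/N$. The $i=1$ term of the simple-pole sum is $\frac{1}{N\cdot T/N}=1/T=\beta$, exactly cancelling the $\beta$ in $\G'$; every remaining summand in both sums is strictly positive, so $\G'(\eg_1+T/N)<0$ deterministically, and monotonicity forces $\so>T$ with no use of $\event$.

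For the upper bound I evaluate $\G'$ at $z=\eg_1+(\ep\log N)/N$ on $\event$. Since every summand is monotone decreasing in $\so>0$, I can bound the $i\ge 2$ pieces by the corresponding sums at $\so=0$, which gives
\begin{equation*}
\tfrac{1}{N}\sum_{i\ge 2}\tfrac{1}{\eg_1+(\ep\log N)/N-\eg_i}\ \le\ \tfrac{1}{N}\sum_{i\ge 2}\tfrac{1}{\eg_1-\eg_i} = 1+O(N^{-1/3+\ep}),
\end{equation*}
and, after the rescaling $a_i=N^{2/3}(\eg_i-2)$,
\begin{equation*}
\tfrac{H^2\beta}{N^2}\sum_{i\ge 2}\tfrac{n_i^2}{(\eg_1+(\ep\log N)/N-\eg_i)^2}\ \le\ \tfrac{H^2\beta}{N^{2/3}}\sum_{i\ge 2}\tfrac{n_i^2}{(a_1-a_i)^2} = O(N^{-2/3+\ep}),
\end{equation*}
where both final estimates come directly from the definition of $\event$. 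The $i=1$ contributions are $\frac{1}{\ep\log N}=o(1)$ from the simple-pole sum and $\frac{H^2\beta n_1^2}{(\ep\log N)^2}=O(1/\log N)$ from the $h^2$ piece, the latter because $n_1^2<\ep\log N$ on $\event$. Collecting these, $\G'(\eg_1+(\ep\log N)/N)\ge (\beta-1)-o(1)>0$ for $T<1$ and $N$ sufficiently large, and monotonicity then gives $\so<\ep\log N$.

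The only genuinely delicate element in the above is the self-interaction term $\frac{H^2\beta n_1^2}{\so^2}$ from the $i=1$ piece of the $h^2$ sum: this diverges as $\so\to 0$ and would obstruct any constant upper bound on $\so$. The endpoint $\ep\log N$ is chosen precisely so that this term remains $O(1/\log N)$ once we invoke $n_1^2<\ep\log N$ on $\event$, which is what allows the argument to close and motivates the otherwise peculiar looking logarithmic upper bound.
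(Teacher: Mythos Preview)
Your proof is correct and uses the same underlying ingredients as the paper (positivity/monotonicity of the sums in $\G'$, together with the estimates built into $\event$), but the organization is different. The paper starts from the critical point equation $\G'(\eg_1+\so/N)=0$, separates off the $i=1$ terms, and derives the two-sided bound
\[
\beta-1+O(N^{-1/3+\ep}) \;<\; \tfrac{1}{\so}+\tfrac{H^2\beta n_1^2}{\so^2} \;<\; \beta
\]
on $\event$, then reads off the bounds on $\so$ from this. Your argument instead proves existence and uniqueness directly via strict convexity of $\G$ on $(\eg_1,\infty)$ and then checks the sign of $\G'$ at the two endpoints $\eg_1+T/N$ and $\eg_1+(\ep\log N)/N$. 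Two pleasant features of your route: you make existence/uniqueness explicit (the paper states it but does not spell out the convexity argument), and your lower bound $\so>T$ is deterministic, requiring no appeal to $\event$. One advantage of the paper's route is that the displayed two-sided inequality immediately yields the asymptotic relation $\beta-1-\tfrac{1}{\so}-\tfrac{H^2\beta n_1^2}{\so^2}+O(N^{-1/3+\ep})=0$, which is used downstream in the computation of $N(\Gmgn(\cpmgn)-\G(\cp))$.
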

\begin{proof} On the event $\event$, the last sum in equation \eqref{eq:ext1/2sdef} is $O(N^{-\frac23+\ep})$ for any sufficiently small $\ep>0$. From this, we get
\beq
\beta-\frac{1}{N}\sum_{i=2}^N\frac{1}{\lambda_1-\lambda_i}-\frac{1}{\so }-\frac{H^2\beta n_1^2}{\so ^2}+O(N^{-\frac23+\ep})\;<\;0\;<\;\beta-\frac{1}{\so }-\frac{H^2\beta n_1^2}{\so ^2}
\eeq
on $\event$.  Further applying definition of $\event$ to the sum on the left hand side and rearranging terms, we get
\beq
\beta-1+O(N^{-\frac13+\ep})\;<\;\frac{1}{\so }+\frac{H^2\beta n_1^2}{\so ^2}\;<\;\beta.
\eeq
Hence, on $\event$, the expression $\frac{1}{\so }+\frac{H^2\beta n_1^2}{\so ^2}$ is bounded above and below by order 1 quantities.  The upper bound ensures that $\so>\frac1\beta=T$ (note this is not a sharp bound).  The lower bound on $\frac{1}{\so }+\frac{H^2\beta n_1^2}{\so ^2}$ ensures that $\so =O(\ep\log N)$ provided that $|n_i|=O(\ep\log N)$.  Since $|n_i|<(\ep\log N)^{1/2}$ for sufficiently large $N$ on $\event$, we can definitely ensure that $|n_i|<C\ep\log N$ for any constant $C$ and sufficiently large $N$.
\end{proof}
Having proved the lemma, we apply the bounds on the order of $p$ to equation \eqref{eq:ext1/2sdef} and conclude that $\so$ satisfies
\beq
\label{eq:shN1/2}
	\beta - 1 - \frac{1}{\so } - \frac{\efres^2 \beta n_1^2}{\so ^2} + O(N^{-\frac13+\ep}) =0
\eeq
with probability $1-N^{-\ep/10}$.
We note that, when $h=HN^{-1/2}$, the equation for $\Gmgn$ is same as the one for $\G$ with $H$ replaced by $H+\xi$. 
Thus  $\cpmgn = \eg_1 + \so_\mgn N^{-1}$ where $\so_\mgn>0$ solves the equation 
\beq
\label{eq:smgnhN1/2}
	\beta - 1 - \frac{1}{\so_\mgn} - \frac{(\efres + \xi)^2 \beta n_1^2}{\so_\mgn^2} + O(N^{-\frac13+\ep}) = 0,
\eeq
and the lemma below follows by the same reasoning as in the lemma above.

\begin{lemma}\label{lem:ext1/2cpmgn} There exists a unique $\so_\mgn>0$ satisfying the equation
\beq\label{eq:ext1/2smgndef}
	\Gmgn'(\eg_1+\so_\mgn N^{-1}) = \beta  - \frac1{N} \sum_{i=1}^N \frac{1}{\eg_1+\so_\mgn N^{-1}-\eg_i} - \frac{(H+\xi)^2\beta}{N^2} \sum_{i=1}^N \frac{n_i^2}{(\eg_1+\so_\mgn N^{-1}-\eg_i)^2} =0 
\eeq
 and, for any sufficiently small $\ep>0$ and sufficiently large $N$, we have $T<\so_\mgn<\ep\log N$ on the event $\event$, which occurs with probability at least $1-N^{-\ep/10}$.
\end{lemma}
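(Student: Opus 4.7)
The plan is to mimic the proof of Lemma \ref{lem:ext1/2cp} almost verbatim, exploiting the observation already made immediately before the statement: the equation \eqref{eq:ext1/2smgndef} for $\so_\mgn$ is exactly the equation \eqref{eq:ext1/2sdef} for $\so$ with $H$ replaced by $H+\xi$. Since $\xi = O(1)$ by the hypothesis of Theorem \ref{thm:ext}, the coefficient $(H+\xi)^2\beta$ is still $O(1)$, so every order-of-magnitude bound in the previous proof carries over with at most a harmless constant change.

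First I would establish existence and uniqueness of $\so_\mgn > 0$ by a direct real-variable analysis of $\Gmgn'(z)$ on $(\lambda_1, \infty)$. On the event $\event$ we have $n_1^2 > N^{-\ep} > 0$, so the term $-(H+\xi)^2\beta n_1^2 N^{-2}(z-\lambda_1)^{-2}$ drives $\Gmgn'(z) \to -\infty$ as $z \to \lambda_1^+$, while $\Gmgn'(z) \to \beta > 0$ as $z \to \infty$. A direct differentiation gives
\beq
\Gmgn''(z) = \frac{1}{N}\sum_{i=1}^N \frac{1}{(z-\lambda_i)^2} + \frac{2(H+\xi)^2\beta}{N^2}\sum_{i=1}^N \frac{n_i^2}{(z-\lambda_i)^3},
\eeq
which is strictly positive for $z > \lambda_1$, so $\Gmgn'$ is strictly increasing on $(\lambda_1,\infty)$ and therefore has a unique zero.

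Next I would derive the quantitative bounds on $\so_\mgn$. Converting the $m=2$ sum in \eqref{eq:ext1/2smgndef} to the rescaled variables $a_i = N^{2/3}(\lambda_i - 2)$ and applying the bound $\sum_{i\ge 2}\frac{n_i^2}{(a_1 - a_i)^2} \le N^\ep$ from the definition of $\event$, together with $(H+\xi)^2 = O(1)$, shows that the contribution of the indices $i \ge 2$ to the last term of \eqref{eq:ext1/2smgndef} is $O(N^{-2/3+\ep})$. Sandwiching the equation $\Gmgn'(\lambda_1 + \so_\mgn N^{-1}) = 0$ by dropping, respectively keeping, negative terms then yields
\beq
\beta - 1 + O(N^{-1/3+\ep}) \;<\; \frac{1}{\so_\mgn} + \frac{(H+\xi)^2 \beta n_1^2}{\so_\mgn^2} \;<\; \beta
\eeq
on $\event$, which is structurally identical to the inequality used in Lemma \ref{lem:ext1/2cp}. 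The upper bound forces $\so_\mgn > 1/\beta = T$, and the lower bound combined with $n_1^2 < \ep \log N$ (from $\event$) forces $\so_\mgn < C\ep \log N$, which after adjusting the constant absorbed into $\ep$ gives the claimed upper bound.

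There is essentially no new obstacle; the only point worth noting is that the whole argument is uniform in $\xi$ over bounded sets, so the standing assumption $\xi = O(1)$ in Theorem \ref{thm:ext} is exactly what is needed to import each estimate from the earlier lemma without loss. The probability estimate $\PP(\event) \ge 1 - N^{-\ep/10}$ is furnished by Lemma \ref{lem:eventprob} and is unchanged.
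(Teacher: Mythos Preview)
Your proposal is correct and matches the paper's approach exactly: the paper does not give a separate proof of this lemma but simply remarks that $\Gmgn$ is $\G$ with $H$ replaced by $H+\xi$, so ``the lemma below follows by the same reasoning as in the lemma above.'' Your explicit monotonicity argument for existence and uniqueness via $\Gmgn''>0$ on $(\lambda_1,\infty)$ is a clean addition that the paper leaves implicit.
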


\subsection{Contour integral computation}

We now consider the ratio of the integrals in the formula \eqref{eq:contour_rep}. 
For the integral in the numerator, we have the following lemma.
\begin{lemma}\label{lem:ext1/2integral} For fixed $H>0$ with $h=HN^{-1/2}$ and $T<1$
\beq	
\label{eq:int_const}
	\int e^{\frac{N}2 (\Gmgn(z)- \Gmgn(\cpmgn))} \dd z 
	=
	\frac{2\ii \sqrt{2 \pi \so_\mgn}  e^{- (\beta-1)\so_\mgn +\frac12 } }{N\sqrt{\beta-1}} 
	\cosh \left( (\efres+\xi) |n_1| \sqrt{\beta (\beta -1)} \right)
	\left(1+O(N^{-\frac{1}{21}+\frac\ep7})\right)
\eeq
on the event $\event$, which occurs with probability at least $1-N^{-\ep/10}$ for any sufficiently small $\ep>0$.
\end{lemma}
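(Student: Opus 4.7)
The plan is to reduce the contour integral to a Bessel-type integral via a rescaling that zooms in to the scale $1/N$ around the branch point $\lambda_1$. The difficulty is that the saddle $\cpmgn = \lambda_1 + s_\mgn/N$ lies within distance $O(1/N)$ of the logarithmic branch point $\lambda_1$, so the exponent is genuinely singular near the saddle and no quadratic Taylor expansion about $\cpmgn$ is available. I therefore substitute $z = \lambda_1 + w/N$ (so $dz = dw/N$), under which the contour becomes the vertical line $w = s_\mgn + iy$, $y \in \R$. On the event $\event$ we have $s_\mgn \in (T, \ep\log N)$ by Lemma \ref{lem:ext1/2cpmgn}, so the saddle is at a well-separated point in the zoomed variable. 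I then split the contour into a local piece $|y| \le N^\delta$ (for some $\delta > 0$ to be optimized) and a far piece $|y| > N^\delta$; the far piece is controlled by the lemma of Section \ref{sec:intapprox}, which I treat as a black box.

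On the local piece I decompose $\Gmgn = \Gmgn^{\mathrm{sing}} + \tilde G$, separating the $i = 1$ contributions (the log term and the $n_1^2$ pole) from the smooth remainder
\begin{equation*}
\tilde G(z) := \beta z - \frac{1}{N}\sum_{i \ge 2}\log(z - \lambda_i) + \frac{(H+\xi)^2 \beta}{N^2}\sum_{i \ge 2}\frac{n_i^2}{z - \lambda_i}.
\end{equation*}
After multiplying by $N/2$, the singular part becomes $\tfrac12 \log(s_\mgn/w) + A(w^{-1} - s_\mgn^{-1})$ with $A = (H+\xi)^2\beta n_1^2/2$, which I keep exactly. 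For $\tilde G$ I Taylor-expand around $\lambda_1$. The event $\event$ gives $\tilde G'(\lambda_1) = \beta - 1 + O(N^{-1/3+\ep})$ via Lemma \ref{lem:specialsumLS}, and $\tilde G''(\lambda_1) = O(N^{1/3+\ep})$, $\tilde G'''(\lambda_1) = O(N^{1+\ep})$ via Lemma \ref{lem:specialsum}. On the local region $|w| \le O(N^\delta)$, the resulting Taylor errors in the exponent are $O(N^{-1/3+\ep+\delta})$, $O(N^{-2/3+\ep+2\delta})$, and $O(N^{-1+\ep+3\delta})$, with the linear term dominating when $\delta < 1/3$.

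The approximating exponent is the model function
\begin{equation*}
\hat f(w) := \tfrac12 \log(s_\mgn/w) + A(w^{-1} - s_\mgn^{-1}) + B(w - s_\mgn), \qquad B = \tfrac{\beta - 1}{2},
\end{equation*}
which vanishes together with its derivative at $w = s_\mgn$ precisely by the saddle equation \eqref{eq:smgnhN1/2}. Extending the truncated integral $\int_{|y| \le N^\delta} e^{\hat f(s_\mgn + iy)}\,dy$ to all of $\R$ introduces an error of order $N^{-\delta/2}$: the model integrand decays only like $|y|^{-1/2}$, but the oscillating phase $e^{i B y}$ makes the tail conditionally convergent at this improved rate, as one sees by a single integration by parts. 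The full model integral is then evaluated in closed form: the substitution $w = t/B$ turns $\int w^{-1/2} e^{Bw + A/w}\,dw$ into $B^{-1/2}\int t^{-1/2} e^{t + AB/t}\,dt = B^{-1/2}\cdot 2\sqrt{\pi}\,i\,\cosh(2\sqrt{AB})$ by the Schl\"afli integral representation of $I_{-1/2}(z) = \sqrt{2/(\pi z)}\cosh z$. Using $2\sqrt{AB} = (H + \xi)|n_1|\sqrt{\beta(\beta-1)}$ (valid since $\cosh$ is even) and the identity $A/s_\mgn + B s_\mgn = (\beta - 1)s_\mgn - \tfrac12$ that follows directly from the saddle equation, and accounting for the Jacobian factor $1/N$, yields exactly the asserted formula.

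The main technical obstacle is the quantitative bookkeeping needed to obtain the specific exponent $N^{-1/21 + \ep/7}$. Choosing $\delta = 2/7 - 6\ep/7$ equates the dominant linear Taylor error with the target rate, and under this choice the quadratic and cubic Taylor errors, the model-tail error $N^{-\delta/2}$, and the tail bound from Section \ref{sec:intapprox} are all strictly smaller. One must also verify that the $O(N^{-1/3+\ep})$ correction in the saddle equation for $s_\mgn$ itself propagates no worse than the linear Taylor error, and that the $\sqrt{s_\mgn}$ prefactor, polylogarithmic on $\event$, does not degrade the overall rate. This careful balance, rather than any single step, is where most of the work will lie.
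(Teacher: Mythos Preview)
Your high-level strategy coincides with the paper's: separate the $i=1$ contributions and keep them exact, Taylor-expand the smooth remainder $\tilde G$ about $\lambda_1$, truncate at height $N^\delta$, invoke Lemma~\ref{lem:intapproxspecialcase} for the tail, and evaluate the resulting model integral via the Bessel identity~\eqref{eq:ntif}. The decomposition, the identification of $\hat f$, and the closed-form evaluation are all essentially the paper's argument.

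There is, however, a genuine gap in the conversion of your exponent errors into an error on the integral, and it makes your choice of $\delta$ fail. You equate the supremum of the linear Taylor error in the exponent, $O(N^{-1/3+\ep+\delta})$, directly with the target multiplicative rate. But the model integrand $e^{\hat f}$ decays only like $|y|^{-1/2}$ along the vertical line, so $\int_{|y|\le N^\delta}|e^{\hat f}|\,dy$ is \emph{not} $O(1)$; the bound $\bigl|\int e^{\hat f}(e^{E}-1)\bigr|\le C\int|e^{\hat f}|\,|E|$ therefore carries an extra power of $N^\delta$. Concretely, the paper tracks the \emph{pointwise} exponent error $O((|u|+1)N^{-1/3+\ep})$ and integrates it against the bound $|e^{f}|\le c$ over the local window, obtaining an additive error $O(N^{2\delta-1/3+\ep})$ on the integral. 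Balancing this against the tail bound $O(N^{-\delta/3})$ from Lemma~\ref{lem:intapproxspecialcase} forces $\delta=(1-3\ep)/7$, exactly half of your $\delta=2/7-6\ep/7$. With your $\delta$ the additive local error is $O(N^{5/21+\ep/7})$, which diverges and swamps the $O(1)$ main term. A secondary issue is that Lemma~\ref{lem:intapproxspecialcase} is proved for the contour $\Re z=\lambda_1$ (i.e.\ $\Re w=0$), not $\Re w=s_\mgn$; the paper shifts the entire contour leftward to $\lambda_1$ (detouring around the branch point on a semicircle of radius $s_\mgn$) precisely so the tail lemma applies as stated. Both points are repaired by following the paper: deform to $\Re w=0_+$ first, then take $\delta=(1-3\ep)/7$ and integrate the pointwise error rather than using its supremum.
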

\begin{proof}
To compute this integral, we need a formula for $N(\Gmgn(z)-\Gmgn(\cpmgn))$
% which is given in \eqref{eq:Gopzwudn}.  
in terms of $u$ where $z=\cpmgn+uN^{-1}$.  We will begin by focusing on the central portion of the integral and then we will handle the tails separately.  When we are on the event $\event$ and $|u|=o(N^{\frac13-\ep})$, we get the following computation: 
\beq\begin{split}
N&(\Gmgn(z)-\Gmgn(\cpmgn))
=N( \Gmgn(z)-  \Gmgn(\cpmgn)- \Gmgn'(\cpmgn)uN^{-1})  \\
& = -  \sum_{i=1}^N \left[ \log \left( 1+ \frac{uN^{-1}}{\cpmgn-\eg_i} \right) -  \frac{uN^{-1}}{\cpmgn-\eg_i}  \right] 
+\frac{(H+\xi)^2\beta}{N}
\sum_{i=1}^N  \frac{n_i^2 u^2N^{-2}}{(\cpmgn+uN^{-1}-\eg_i)(\cpmgn-\eg_i)^2}\\
&=-\log\left(1+\frac{u}{\so_\mathfrak{M}}\right)+\frac{u}{\so_\mathfrak{M}}
+O\left(\sum_{j=2}^N\frac{|u|^2N^{-2}}{(\cpmgn-\lambda_j)^2}\right)
+\frac{(H+\xi)^2\beta n_1^2u^2}{(\so_\mathfrak{M}+u)\so_\mathfrak{M}^2}
+O\left(\sum_{j=2}^N\frac{|u|^2N^{-3}}{(\cpmgn-\lambda_j)^3}\right)\\
\end{split}\eeq
Using properties of the event $\event$, the quantity in the last line above can be simplified as follows:
\beq\begin{split}
&=-\log\left(1+\frac{u}{\so_\mathfrak{M}}\right)+\frac{u}{\so_\mathfrak{M}}
+\frac{(H+\xi)^2\beta n_1^2u^2}{(\so_\mathfrak{M}+u)\so_\mathfrak{M}^2}+O(|u|^2N^{-\frac23+\ep})\\
&=-\log\left(1+\frac{u}{\so_\mathfrak{M}}\right)+(\beta-1+O(N^{-\frac13+\ep}))(u-\so_\mathfrak{M})+1
+\frac{(H+\xi)^2\beta n_1^2}{(\so_\mathfrak{M}+u)}+O(|u|^2N^{-\frac23+\ep})\\
&=-\log\left(1+\frac{u}{\so_\mathfrak{M}}\right)+(\beta-1)(u-\so_\mathfrak{M})+1
+\frac{(H+\xi)^2\beta n_1^2}{(\so_\mathfrak{M}+u)}+O\left((|u|+1)N^{-\frac13+\ep}\right).
\end{split}\eeq
Now, set $f(u)=\frac{1}{2}\left(-\log\left(1+\frac{u}{\so_\mathfrak{M}}\right)+(\beta-1)(u-\so_\mathfrak{M})+1
+\frac{(H+\xi)^2\beta n_1^2}{(\so_\mathfrak{M}+u)}\right)$ and let $0<\e<\frac16-\frac\ep2$.  Then we see that, on the event $\event$,
\beq\begin{split}
\int_{\cpmgn-\ii\infty}^{\cpmgn+\ii\infty}\exp&\left[\frac{N}{2}(\Gmgn(z)-G_m(\cpmgn))\right]\dd z\\
=&\frac{1}{N}\left(\int_{-\ii N^\e}^{\ii N^\e}\exp\left(f(u)+O\left((|u|+1)N^{-\frac13+\ep}\right)\right)\dd u
+\text{ integrals of tails}\right).
\end{split}\eeq
For the purposes of computing this, it helps to deform the contour by shifting it leftward so that, instead of the vertical contour from $\cpmgn-\ii\infty$ to $\cpmgn+\ii\infty$, we consider the contour from $\lambda_1-\ii\infty$ to $\lambda_1+\ii\infty$ which is a straight vertical line except near $\lambda_1$ where it passes to the right of the branch point.  The integral on this contour will be the same as on the original contour and we get
\beq\begin{split}
\int_{\lambda_{1+}-\ii\infty}^{\lambda_{1+}+\ii\infty}\exp&\left[\frac{N}{2}(\Gmgn(z)-\Gmgn(\cpmgn))\right]\dd z\\
=&\frac{1}{N}\left(\int_{0_+-\ii N^\e}^{0_++\ii N^\e}\exp\left(f(u-\so_\mathfrak{M})+O\left((|u|+1)N^{-1/3}\right)\right)\dd u+\text{ integrals of tails}\right)\\
=&\frac{1}{N}\left(\int_{0_+-\ii N^\e}^{0_++\ii N^\e}\exp(f(u-\so_\mathfrak{M}))\left(1+O\left((|u|+1)N^{-\frac13+\ep}\right)\right)\dd u+\text{ integrals of tails}\right).
\end{split}\eeq
Next, we compute the integral on the portion of the contour from $-\ii N^\e$ to $\ii N^\e$.  Call this portion of the contour $C$.  We define $C$ more specifically to be composed of  three pieces:
\begin{itemize}
\item $C_1$ is the straight line from $\lambda_1-\ii N^\e$ to $\lambda_1-\ii \so_\mathfrak{M}$.
\item $C_2$ is the semicircle given by $\lambda_1+\so_\mathfrak{M}e^{\ii\theta}$ with $\theta\in[-\frac{\pi}{2},\frac{\pi}{2}]$.
\item $C_3$ is the straight line from $\lambda_1+\ii \so_\mathfrak{M}$ to $\lambda_1+\ii N^\e$.
\end{itemize}
We show that $\exp(f(u-\so_\mathfrak{M}))$ is bounded on $C_1,C_2,C_3$ by bounding the real part of $f(u-\so_\mathfrak{M})$.  On $C_1$ and $C_3$, we have
\beq
\Re(f(u-\so_\mathfrak{M}))=-\frac{1}{2}\log\left(\frac{|u|}{\so_\mathfrak{M}}\right)-2\so_\mathfrak{M}(\beta-1)+1<1.
\eeq
On $C_2$, we have
\beq\begin{split}
\Re(f(\so_\mathfrak{M}e^{\ii\theta}-\so_\mathfrak{M}))
=&-\frac{1}{2}\log(|e^{\ii\theta}|)+(\beta-1)\cdot\Re(\so_\mathfrak{M}e^{\ii\theta}-2\so_\mathfrak{M})+1+\Re\left(\frac{(H+\xi)^2\beta n_1^2}{\so_\mathfrak{M}e^{\ii\theta}}\right)\\
<&1+\frac{(H+\xi)^2\beta n_1^2}{\so_\mathfrak{M}}.
\end{split}\eeq
Since the real part of $f(u-\so_\mathfrak{M})$ is bounded, the magnitude of $\exp(f(u-\so_\mathfrak{M}))$ is also bounded by some constant (call it $c$) so we have
\beq\begin{split}
\int_{C_1}\exp&(f(u-\so_\mathfrak{M}))\left(1+O\left((|u|+1)N^{-\frac13+\ep}\right)\right)\dd u\\
=&\int_{-\ii N^\e}^{-\ii \so_\mathfrak{M}}\exp(f(u-\so_\mathfrak{M}))\left(1+O\left((|u|+1)N^{-\frac13+\ep}\right)\right)\dd u\\
=&\int_{-\ii N^\e}^{-\ii \so_\mathfrak{M}}\exp(f(u-\so_\mathfrak{M}))\dd u+O(c\cdot2N^{2\e-\frac13+\ep})\\
=&\int_{C_1}\exp(f(u-\so_\mathfrak{M}))\dd u+O(N^{2\e-\frac13+\ep}).
\end{split}\eeq
Similarly, we have
\beq
\int_{C_3}\exp(f(u-\so_\mathfrak{M}))\left(1+O\left((|u|+1)N^{-\frac13+\ep}\right)\right)\dd u=\int_{C_3}\exp(f(u-\so_\mathfrak{M}))\dd u+O(N^{2\e-\frac13+\ep}).
\eeq
Finally, for $C_2$, we get
\beq\begin{split}
\int_{C_2}\exp&(f(u-\so_\mathfrak{M}))\left(1+O\left((|u|+1)N^{-\frac13+\ep}\right)\right)\dd u\\
=&\int_{-\pi/2}^{\pi/2}\exp(f(\so_\mathfrak{M}e^{\ii\theta}-\so_\mathfrak{M}))\left(1+O\left(N^{-\frac13+\ep}\right)\right)\so_\mathfrak{M}\ii e^{\ii\theta}\dd\theta\\
=&\int_{-\pi/2}^{\pi/2}\exp(f(\so_\mathfrak{M}e^{\ii\theta}-\so_\mathfrak{M}))\so_\mathfrak{M}\ii e^{\ii\theta}\dd\theta+O\left(\so_\mathfrak{M}c\pi N^{-\frac13+\ep}\right)\\
=&\int_{C_2}\exp(f(u-\so_\mathfrak{M}))\dd u+O(N^{-\frac13+\ep}).
\end{split}\eeq
Thus, we conclude that, on the event $\event$, for any $0<\e<\frac16-\frac\ep2$,
\beq
\int_{0_+-\ii N^\e}^{0_++\ii N^\e}\exp(f(u-\so_\mathfrak{M}))\left(1+O\left((|u|+1)N^{-\frac13+\ep}\right)\right)\dd u=\int_{0_+-\ii N^\e}^{0_++\ii N^\e}\exp(f(u-\so_\mathfrak{M}))\dd u+O(N^{2\e-\frac13+\ep}).
\eeq
We use lemma \ref{lem:intapproxspecialcase} to show that the integral of the tails has order $O(N^{-\e/3})$.  This has the same order as $O(N^{2\e-\frac13+\ep})$ when $\e=\frac{1}{7}(1-3\ep)$, which is positive for any $0<\ep<\frac13$.  Since we are free to choose any $0<\e<\frac16$, we set $\e=\frac{1}{7}(1-3\ep)$ and get
\beq\label{eq:ext1/2contourcalc}
\begin{aligned}
	\int& e^{\frac{N}{2}(\Gmgn(z) - \Gmgn(\cpmgn))} \dd z 
	=\frac 1N \left(\int_{0_+-\ii N^\e}^{0_++\ii N^\e}\exp(f(u-\so_\mathfrak{M}))\dd u+O\left(N^{-\frac{1}{21}+\frac\ep7}\right) \right)\\
	&=  \frac1{N} \left(\int_{0_+-\ii N^\e}^{0_++\ii N^\e} \sqrt{\frac{\so_\mgn}{\so_\mgn + u}} e^{\frac{(\beta - 1)(u - \so_\mgn)}{2} + \frac12 + \frac{(\efres + \xi)^2 \beta n_1^2}{2(\so_\mgn + u)}} \dd u +O\left(N^{-\frac{1}{21}+\frac\ep7}\right) \right) \\
	&= \frac{\so_\mgn ^{1/2}e^{-(\beta - 1)\so_\mgn + \frac12 }}{N} \left(\int_{0_+-\ii N^\e}^{0_++\ii N^\e} \frac{1}{\sqrt{\so_\mgn + u}} e^{\frac{(\beta - 1)(\so_\mgn + u)}{2} + \frac{(\efres + \xi)^2 \beta n_1^2}{2(\so_\mgn + u)}} \dd u+ O\left(N^{-\frac{1}{21}+\frac\ep7}\right) \right).
\end{aligned}
\eeq
The integral $ \int_{0_++\ii\R} \frac{1}{\sqrt{\so_\mgn + u}} \exp\left(\frac{(\beta - 1)(\so_\mgn + u)}{2} + \frac{(\efres + \xi)^2 \beta n_1^2}{2(\so_\mgn + u)}\right) \dd u$ can be evaluated using the contour integral formula for the modified Bessel function (see e.g. \cite{Handbook}): 
\beq	\label{eq:ntif}
	\int_{0_++\ii\R} \frac1{\sqrt{w}} e^{aw+\frac{b}{w}} \dd w = 2 \pi \ii \left( \frac{b}{a} \right)^{1/4} I_{-\frac12}(2\sqrt{ab}) = \frac{2\ii\sqrt{\pi}}{\sqrt{a}} \cosh(2\sqrt{ab}).
\eeq
Since this integral converges, the integral in the last line of equation \eqref{eq:ext1/2contourcalc} must converge to the same value.  Furthermore, the tails of the integral in \ref{eq:ntif} beyond order $N^\e$ only contribute $O(N^{-\e/2})$ to the value of the intergral.  This is less than $O(N^{-\frac{1}{21}+\frac\ep7})$ since we set $\e=\frac17 (1-3\ep)$.  Hence, we conclude that, on the event $\event$,
\beq	
\label{eq:int_const}
	\int e^{\frac{N}2 (\Gmgn(z)- \Gmgn(\cpmgn))} \dd z 
	=\frac{2\ii \sqrt{2 \pi \so_\mgn}  e^{- (\beta-1)\so_\mgn +\frac12 } }{N\sqrt{\beta-1}} 
	\cosh \left( (\efres+\xi) |n_1| \sqrt{\beta (\beta -1)} \right)\left(1+O(N^{-\frac{1}{21}+\frac\ep7})\right).
\eeq
\end{proof}

We now return to the task of computing the moment generating function of $\mgn$ using the formula in line \eqref{eq:ovwefdn}.  The integral in the denominator of that formula can be viewed as a special case of the numerator in which $\xi=0$ and $\so_\mgn$ is replaced with $p$.
Therefore, on the event $\event$,
\beq
	\frac{\int e^{\frac{N}{2}(\Gmgn(z) - \Gmgn(\cpmgn))} \dd z}{\int e^{\frac{N}{2}(\G(z) - \G(\cp))} \dd z} = \sqrt{\frac{\so_\mgn}{\so }} e^{-(\beta - 1)(\so_\mgn - \so )} \frac{\cosh \left( (\efres+\xi) |n_1| \sqrt{\beta (\beta -1)} \right)}{\cosh \left(\efres |n_1| \sqrt{\beta (\beta -1)} \right) }\left(1+O(N^{-\frac{1}{21}+\frac\ep7})\right).
\eeq

To compute the moment generating function of $\mgn$ from the formula \eqref{eq:ovwefdn} it remains only to evaluate the factor $e^{\frac{N}{2}(\Gmgn(\cpmgn)-\G(\cp))}$.  This is computed in \cite{BCLW} and the authors find the following (Note that in their computation is less precise about the order of the Big-$O$ term. However, it can easily be made rigorous by repeating their steps using the assumptions that hold on the event $\event$ and carefully tracking the order of each term.  This yields the result below):
\beq
\begin{aligned}
	N(\Gmgn(\cpmgn)-\G(\cp))
	= & - \log(\frac{\so_\mgn}{\so}) + 2(\beta - 1) (\so_\mgn - \so) + (2\efres \xi + \xi^2) \beta + O(N^{-\frac13+\ep}).
\end{aligned}
\eeq
Thus we can conclude that 
\beq
	\langle e^{\beta \xi \sqrt{N} \mgn} \rangle 
	=e^{  \frac{ (2H\xi+ \xi^2) \beta}{2} }
	\frac{\cosh \left( (\efres+\xi) |n_1| \sqrt{\beta (\beta -1)} \right)}{\cosh \left(\efres |n_1| \sqrt{\beta (\beta -1)} \right) }
	\left(1+O(N^{-\frac{1}{21}+\frac\ep7})\right).
\eeq
Replacing $\beta \xi$ by $\xi$ and using $T=1/\beta$, we obtain
\beq	
	\langle e^{ \xi \sqrt{N} \mgn} \rangle 
	=e^{ \efres \xi + \frac{ T \xi^2}{2} }
	\frac{\cosh \left( (\efres+T\xi) |n_1| \frac{\sqrt{1 - T}}{T} \right)}{\cosh \left(\efres |n_1| \frac{\sqrt{1 - T}}{T} \right) }
	\left(1+O(N^{-\frac{1}{21}+\frac\ep7})\right).
\eeq
This gives us the result stated in Theorem \ref{thm:ext}.

%%%%%%%%%%%%%%%%%%%%%%%%%%%%%%%%%

\section{Integral Approximation Lemmas}\label{sec:intapprox}

The proof of Theorems \ref{thm:ext} in the preceding section required us to compute a contour integral.  In that computation, we relied on the fact that the dominant contribution to the integral comes from a neighborhood of the critical point.  
%Moreover, the value of the integral outside that neighborhood shrinks to zero as $N\to\infty$.
In this section, we prove that fact by providing an upper bound for the value of the integral outside of a neighborhood of the critical point and showing that the upper bound shrinks to zero as $N\to\infty$.  This is the most technical part of the contour integral computations.

\begin{lemma}\label{lem:intapproxspecialcase} Tail approximation for overlap with external field when $h=H^{-1/2}$ and $T<1$:  For any $\e>0$,
\beq
\int_{\ii N^\e}^{\ii\infty}\exp\left[\frac N2(\Gmgn(\lambda_1+uN^{-1})-\Gmgn(\cpmgn))\right]\dd u=O(N^{-\e/3})\quad\text{as }N\to\infty
\eeq
on the event $\event$.
\end{lemma}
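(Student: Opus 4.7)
The plan is to parameterize the tail contour by $u = \ii t$ for $t \in [N^\e, \infty)$ (so that $\dd u = \ii \dd t$ and $z_t := \lambda_1 + \ii t/N$), and to bound the resulting $t$-integral by combining pointwise magnitude estimates with oscillation-based cancellation.

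First I would extend the Bessel-type approximation from the proof of Lemma~\ref{lem:ext1/2integral} to the window $t \in [N^\e, N^{1/3-\ep}]$. Repeating the Taylor expansion of $\tfrac{N}{2}[\Gmgn(z_t) - \Gmgn(\cpmgn)]$ at $z_t$ and invoking the bounds $\tfrac{1}{N}\sum_{i\geq 2} d_i^{-1} = 1 + O(N^{-1/3+\ep})$ together with $\sum_{i\geq 2} d_i^{-m} = O(N^{(2m-2)/3 + \ep})$ for $m = 2, 3$ supplied by $\event$ (with $d_i = \lambda_1 - \lambda_i$), one obtains
\begin{equation*}
\exp\!\left[\tfrac{N}{2}(\Gmgn(z_t) - \Gmgn(\cpmgn))\right] = \sqrt{\tfrac{\so_\mgn}{\so_\mgn + \ii t}}\,\exp\!\left[\tfrac{(\beta - 1)(\ii t - \so_\mgn)}{2} + \tfrac{1}{2} + \tfrac{(H + \xi)^2 \beta n_1^2}{2(\so_\mgn + \ii t)}\right]\bigl(1 + E(t, N)\bigr),
\end{equation*}
uniformly on the window, with multiplicative error $|E(t, N)| = O((t + 1)\,N^{-1/3 + \ep})$.

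The Bessel approximant has magnitude $O((\so_\mgn/t)^{1/2})$ and is not absolutely integrable; however, its phase contains the linear-in-$t$ factor $e^{\ii(\beta - 1)t/2}$. A single integration by parts, using that the phase derivative $(\beta - 1)/2 + O(1/t^2)$ is bounded away from zero, yields a Dirichlet-type bound $O(N^{-\e/2}\sqrt{\log N})$ for the oscillatory integral of the approximant over $[N^\e, N^{1/3-\ep}]$. The multiplicative error $E(t, N)$ contributes at most $O(N^{-\ep/3})$ after integration against $(\so_\mgn/t)^{1/2}$ on the same window.

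For $t \geq N^{1/3 - \ep}$ the Taylor approximation breaks down, and I would estimate $|\text{integrand}|$ directly from $\text{Re}[\Gmgn(z_t) - \Gmgn(\cpmgn)]$, splitting the eigenvalue sum by $d_i \lessgtr t/N$ and using Lemma~\ref{lem:LandonSosoe} together with the $\event$-bounds. The second-order contribution $-\tfrac{t^2}{4 N^2}\sum_{i \geq 2} d_i^{-2}$ supplies Gaussian-type decay $e^{-c\, t^2 N^{-2/3 + \ep}}$, super-polynomially small for $t \geq N^{1/3 + \ep}$; the narrow transition band $t \in [N^{1/3 - \ep}, N^{1/3 + \ep}]$ is handled by continuing the oscillation argument from the previous paragraph, and for $t \geq N$, every factor $(z_t - \lambda_i)^{-1/2}$ has magnitude $(t/N)^{-1/2}$, giving overall decay $(t/N)^{-N/2}$ which is trivially integrable. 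Combining all three regions yields the claimed $O(N^{-\e/3})$ bound (in fact $O(N^{-\e/2 + o(1)})$). The main obstacle is extracting decay in the first region, since the integrand is not absolutely integrable: one must use integration by parts to convert oscillation into decay, and carefully balance the $t$-growing multiplicative error $E(t, N)$ against the $N^{-\e/2}$ gain produced by a single integration by parts.
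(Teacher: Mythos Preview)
Your approach stays on the vertical contour through $\lambda_1$ and tries to extract decay from oscillation via integration by parts. The paper instead \emph{deforms} the tail contour to the left, replacing the vertical line by the curve $\lambda_1 - f(t)N^{-1} + \ii(N^\e + t)N^{-1}$ with $f(t) = (t+1)^\Delta - 1$, $0 < \Delta < \tfrac13$. The $\beta z$ term in $\Gmgn$ then contributes $e^{-\beta f(t)/2}$ to the magnitude, and after balancing against the logarithmic sum one obtains absolute decay $e^{-\frac12(\beta-1)(1+o(1))f(t)}$; the remaining $(\so_\mgn/N^\e)^{1/2}$ factor from the $j=1$ log term gives the claimed $O(N^{-\e/3})$. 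No oscillation is needed.

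Your route, as written, has two genuine gaps. First, on the window $t\in[N^\e,N^{1/3-\ep}]$ you bound the error contribution by integrating $(\so_\mgn/t)^{1/2}\,|E(t,N)|$ absolutely; but with $|E|=O((t+1)N^{-1/3+\ep})$ this integral is
\[
\int_{N^\e}^{N^{1/3-\ep}} t^{1/2}\,N^{-1/3+\ep}\,\dd t \;\asymp\; N^{-1/3+\ep}\cdot N^{\frac12-\frac{3\ep}{2}} \;=\; N^{\frac16-\frac{\ep}{2}},
\]
which grows, not $O(N^{-\ep/3})$ as you claim. To salvage this you would have to integrate by parts against the \emph{full} integrand (controlling $\Gmgn''$ along the vertical line), not just the Bessel approximant; the remainder $E$ has unknown phase structure and cannot simply be absorbed after one IBP on the leading term.

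Second, the ``Gaussian decay $e^{-ct^2N^{-2/3+\ep}}$'' in the region $t\geq N^{1/3-\ep}$ is not justified. The quadratic expansion $-\tfrac{t^2}{4N^2}\sum_{i\ge 2}d_i^{-2}$ of $\mathrm{Re}\,\Phi$ is only valid when $t/N\ll d_i$ for every $i$; on $\event$ one has $d_2\ge N^{-2/3-\ep/2}$, so the expansion breaks down precisely at $t\sim N^{1/3-\ep/2}$, exactly where you want to use it. Moreover $\event$ supplies only \emph{upper} bounds on $\sum d_i^{-m}$, whereas decay requires a lower bound (and your stated bound $\sum d_i^{-m}=O(N^{(2m-2)/3+\ep})$ is off by $N^{2/3}$ anyway, since $d_i=(a_1-a_i)N^{-2/3}$ gives $\sum d_i^{-2}=N^{4/3}\sum(a_1-a_i)^{-2}$). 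The transition band $[N^{1/3-\ep},N^{1/3+\ep}]$ is thus not covered by either of your two mechanisms. The contour deformation in the paper sidesteps all of this by producing genuine exponential decay in $t^\Delta$ uniformly along the whole tail.
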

\begin{proof} First, observe that we are using a vertical contour with real part equal to $\l_1$ as opposed to the original contour, which had real part equal to $\g$.  This is due to a contour deformation that we did when computing the integral on the central portion of the contour. To show that the integrals of the tails tend to zero, we deform the contour yet again.  Instead of the vertical line contour given by $\lambda_1+i(N^\e+t)N^{-1}$ with $t\in[0,\infty)$, we consider the contour $C_4$ given by $\lambda_1-f(t)N^{-1}+i(N^\e+t)N^{-1}$ where $f(t)=(t+1)^\Delta-1$ for some $0<\Delta<\frac{1}{3}$.  To bound $\int_{\ii N^\e}^{\ii\infty}\exp[\frac N2(\Gmgn(\lambda_1+uN^{-1})-\Gmgn(\cpmgn))]\dd u$ we observe that 
\beq\begin{split}
\bigg|\int_{\ii N^\e}^{\ii\infty}&\exp\left[\frac N2(\Gmgn(\lambda_1+uN^{-1})-\Gmgn(\cpmgn))\right]\dd u\bigg|\\
&=\int_{0}^{\infty}\bigg|(f'(t)+i)\exp\left[\frac N2(\Gmgn(\lambda_1+(-f(t)+i(N^\e+t))N^{-1})-\Gmgn(\cpmgn))\right]\bigg|\dd t\\
&\leq\int_{0}^{\infty}\bigg|-\frac{\Delta}{(t+1)^{1-\Delta}}+i\;\bigg|\cdot\bigg|\exp\left[\frac N2(\Gmgn(\lambda_1+(-f(t)+i(N^\e+t))N^{-1})-\Gmgn(\cpmgn))\right]\bigg|\dd t\\
&\leq\int_{0}^{\infty}\sqrt{2}\;\bigg|\exp\left[\frac N2(\Gmgn(\lambda_1+(-f(t)+i(N^\e+t))N^{-1})-\Gmgn(\cpmgn))\right]\bigg|\dd t
\end{split}\eeq
Thus, it suffices to show $\int_{0}^{\infty}|\exp\left[\frac N2(\Gmgn(\lambda_1+(-f(t)+i(N^\e+t))N^{-1})-\Gmgn(\cpmgn))\right]|\dd u=O(N^{-\e/3})$.  We use the notation $\Gmgn(z)=A(z)+B(z)$ where
\beq
A(z)=\beta z-\frac{1}{N}\sum_{j=1}^N\log(z-\lambda_j)\qquad B(z)=\frac{(H+\xi)^2\beta}{N^2}\sum_{j=1}^N\frac{n_j^2}{z-\lambda_j}
\eeq
We begin by noting that 
\beq\begin{split}
&\left|\int\exp\left[\frac{N}{2}\left(\Gmgn(z)-\Gmgn(\cpmgn)\right)\right]\dd z\right| \\
&\leq\int\left|\exp\left[\frac{N}{2}\left(A(z)-A(\cpmgn)\right)\right]\right|\;\cdot\;
\left|\exp\left[\frac{N}{2}\left(B(z)-B(\cpmgn)\right)\right]\right|\dd z
\end{split}\eeq
Therefore, in order to show that the integral on the tail has order $O(N^{-\e/3})$, it is enough two prove the following two things:
\begin{itemize}
\item $\int_0^\infty\left|\exp\left(\frac{N}{2}(A(\lambda_1-f(t)N^{-1}+i(N^\e+t)N^{-1})-A(\cpmgn))\right)\right|\dd t=O(N^{-\e/3})$ and
\item $\left|\exp\left(\frac{N}{2}(B(\lambda_1-f(t)N^{-1}+i(N^\e+t)N^{-1})-B(\cpmgn))\right)\right|$ is bounded for $t>0$.
\end{itemize}
The integral in the first bullet point can be rewritten as follows:
\beq\begin{split}
\int_0^\infty&\left|\exp\left(\frac{N}{2}(A(\lambda_1-f(t)N^{-1}+i(N^\e+t)N^{-1})-A(\cpmgn))\right)\right|\dd t\\
=&\int_0^\infty\exp\left(\frac{N\beta}{2}(\lambda_1-f(t)N^{-1}-\cpmgn)\right)
\cdot\left|\exp\left[-\frac{1}{2}\sum_{j=1}^N\log\left(\frac{\lambda_1-f(t)N^{-1}+i(N^\e+t)N^{-1}-\lambda_j}{\cpmgn-\lambda_j}\right)\right]\right|\dd t\\
=&\int_0^\infty\exp\left(-\frac{\beta(\so_\mgn +f(t))}{2}\right)
\cdot\left|\exp\left[-\frac{1}{2}\sum_{j=1}^N\log\left(\frac{\lambda_1-f(t)N^{-1}+i(N^\e+t)N^{-1}-\lambda_j}{\cpmgn-\lambda_j}\right)\right]\right|\dd t\\
=&\int_0^\infty\exp\left(-\frac{\beta(\so_\mgn +f(t))}{2}\right)
\cdot\exp\left[-\frac{1}{2}\sum_{j=1}^N\log\left|\frac{\lambda_1-f(t)N^{-1}+i(N^\e+t)N^{-1}-\lambda_j}{\cpmgn-\lambda_j}\right|\right]\dd t
\end{split}\eeq
We begin by showing that this integral restricted to the interval $[2(\cpmgn-\lambda_N)N,\infty)$ is of order $O(e^{-N/2})$ .  If we integrate over just the first factor in the expression above, we get
\beq
\int_{2(\cpmgn-\lambda_N)N}^\infty\exp\left(-\frac{\beta(\so_\mgn +f(t))}{2}\right)\dd t
=\exp\left(-\frac{\beta(\so_\mgn -1)}{2}\right)\int_{2(\cpmgn-\lambda_N)N}^\infty\exp\left(-\frac{(t+1)^{\Delta}}{2}\right)\dd t=O(\exp(-N^\Delta))
\eeq
In the last equality above, we used the fact (see Lemma \ref{lem:ext1/2cpmgn}) that $\so_\mgn>T$ on the event $\event$.  Since this integral converges, it suffices to show that $\exp\left[-\frac{1}{2}\sum_{j=1}^N\log\left|\frac{\lambda_1-f(t)N^{-1}+i(N^\e+t)N^{-1}-\lambda_j}{\cpmgn-\lambda_j}\right|\right]$ is of order $O(e^{-N/2})$ for $t\geq2(\cpmgn-\lambda_N)N$.
\beq\begin{split}
\exp&\left[-\frac{1}{2}\sum_{j=1}^N\log\left|\frac{\lambda_1-f(t)N^{-1}+i(N^\e+t)N^{-1}-\lambda_j}{\cpmgn-\lambda_j}\right|\right]\\
\leq&\exp\left[-\frac{1}{2}\sum_{j=1}^N\log\left|\frac{(N^\e+t)N^{-1}}{\cpmgn-\lambda_j}\right|\right]
\leq\exp\left[-\frac{1}{2}\sum_{j=1}^N\log\left|\frac{2(\cpmgn-\lambda_N)}{\cpmgn-\lambda_j}\right|\right]\\
\leq&\exp\left[-\frac{N}{2}\log(2)\right]<e^{-N/2}
\end{split}\eeq
Next, we show that the integral is of order $O(N^{-\e/3})$ on the interval $[0, 2(\cpmgn-\lambda_N)N]$.  For $t$ in this interval we have
%can bound $\exp\left[-\frac{1}{2}\sum_{j=1}^N\log\left|\frac{\lambda_1-f(t)N^{-1}+i(N^\e+t)N^{-1}-\lambda_j}{\cpmgn-\lambda_j}\right|\right]$ as follows:
\beq\begin{split}
\exp&\left[-\frac{1}{2}\sum_{j=1}^N\log\left|\frac{\lambda_1-f(t)N^{-1}+i(N^\e+t)N^{-1}-\lambda_j}{\cpmgn-\lambda_j}\right|\right]\\
&=\exp\left[-\frac{1}{2}\sum_{j=1}^N\log\left|\frac{a_1-a_j-f(t)N^{-1/3}+i(N^\e+t)N^{-1/3}}{\so_\mgn N^{-1/3}+a_1-a_j}\right|\right].
%&=\exp\left[-\frac{1}{2}\log\left|\frac{-f(t)+i(N^\e+t)}{\so_\mgn }\right|\right]
%\cdot\exp\left[-\frac{1}{2}\sum_{j=2}^N\log\left|\frac{a_1-a_j-f(t)N^{-1/3}+i(N^\e+t)N^{-1/3}}{\so_\mgn N^{-1/3}+a_1-a_j}\right|\right].\\
\end{split}\eeq
To obtain an upper bound for this quantity, we begin with the $j=1$ term and observe that 
\beq\label{eq:intapproxj=1}
\exp\left[-\frac{1}{2}\log\left|\frac{-f(t)+i(N^\e+t)}{\so_\mgn }\right|\right]
\leq\exp\left[-\frac{1}{2}\log\left|\frac{N^\e}{\so_\mgn }\right|\right]
\leq\left(\frac{\so_\mgn }{N^\e}\right)^{1/2}.
\eeq
For the summation of the $j\geq2$ terms, we get
\beq\begin{split}
\exp&\left[-\frac{1}{2}\sum_{j=2}^N\log\left|\frac{a_1-a_j-f(t)N^{-1/3}+i(N^\e+t)N^{-1/3}}{\so_\mgn N^{-1/3}+a_1-a_j}\right|\right]\\
&\leq\exp\left[-\frac{1}{4}\sum_{j=2}^N\log\left|\frac{(a_1-a_j-f(t)N^{-1/3})^2+(N^\e+t)^2N^{-2/3}}{(\so_\mgn N^{-1/3}+a_1-a_j)^2}\right|\right]\\
&\leq\exp\left[-\frac{1}{4}\sum_{j=2}^N\log\left|1+\frac{-2(f(t)+\so_\mgn )(a_1-a_j)N^{-1/3}-\so_\mgn ^2N^{-2/3}+(N^\e+t)^2N^{-2/3}}{(\so_\mgn N^{-1/3}+a_1-a_j)^2}\right|\right].
\end{split}\eeq
Dropping some smaller terms, we see that the last line of the inequality above has upper bound
\beq\begin{split}
\exp&\left[-\frac{1}{4}\sum_{j=2}^N\log\left|1-\frac{2(f(t)+\so_\mgn )(a_1-a_j)N^{-1/3}}{(\so_\mgn N^{-1/3}+a_1-a_j)^2}\right|\right]\\
&\leq\exp\left[-\frac{1}{4}\sum_{j=2}^N\log\left|1-\frac{2(f(t)+\so_\mgn )N^{-1/3}}{a_1-a_j}\right|\right]\\
&\leq\exp\left[\frac{1}{4}\sum_{j=2}^N\left(\frac{2(f(t)+\so_\mgn )N^{-1/3}}{a_1-a_j}+\left(\frac{2(f(t)+\so_\mgn )N^{-1/3}}{a_1-a_j}\right)^2\right)\right]\\
&=\exp\left[\frac{f(t)+\so_\mgn}{2}\sum_{j=2}^N\frac{N^{-1/3}}{a_1-a_j}+((f(t)+\so_\mgn) N^{-1/3})^2\sum_{j=2}^N\frac{1}{(a_1-a_j)^2}\right].
\end{split}\eeq
Next, using the properties of the event $\event$, we see that the last line above has upper bound
\beq
\exp\left[\frac{f(t)+\so_\mgn }{2}\left((1+O(N^{-\frac13+\ep}))+O(N^{-\frac23+\Delta+\ep})\right)\right]\\
=\exp\left[\frac{f(t)+\so_\mgn }{2}\left(1+O(N^{-\frac13+\ep})\right)\right].
\eeq
Combining this with the upper bound from the $j=1$ term in \eqref{eq:intapproxj=1}, we conclude that 
\beq
\exp\left[-\frac{1}{2}\sum_{j=1}^N\log\left|\frac{\lambda_1-f(t)N^{-1}+i(N^\e+t)N^{-1}-\lambda_j}{\cpmgn-\lambda_j}\right|\right]
\leq\left(\frac{\so_\mgn }{N^\e}\right)^{1/2}
\exp\left[\frac{f(t)+\so_\mgn }{2}\left(1+O(N^{-\frac13+\ep})\right)\right].
\eeq
Finally, plugging this back into the original integral, we get
\beq\begin{split}
\int_0^{2(\cpmgn-\lambda_N)N}&\left|\exp\left(\frac{N}{2}(A(\lambda_1-f(t)N^{-1}+i(N^\e+t)N^{-1})-A(\cpmgn))\right)\right|\dd t\\
\leq&\int_0^{2(\cpmgn-\lambda_N)N}\exp\left(-\frac{\beta(\so_\mgn +f(t))}{2}\right)
\cdot\left(\frac{\so_\mgn }{N^\e}\right)^{1/2}\exp\left[\frac{f(t)+\so_\mgn }{2}(1+O(N^{-\frac13+\ep}))\right] \dd t\\
=&\left(\frac{\so_\mgn }{N^\e}\right)^{1/2}\int_0^{2(\cpmgn-\lambda_N)N}
\exp\left[-\frac{\beta-1+O(N^{-\frac13+\ep})}{2}\cdot(f(t)+\so_\mgn )\right] \dd t\\
\end{split}\eeq
Since $\beta>1$, there exists some $C''>0$ such that the integral is bounded above by
\beq
\left(\frac{\so_\mgn }{N^\e}\right)^{1/2}\int_0^{2(\cpmgn-\lambda_N)N}
\exp\left[-C''((t+1)^\Delta-1)\right] \dd t
=O\left(\left(\frac{\so_\mgn }{N^\e}\right)^{1/2}\right)
=O\left(\left(\frac{\ep\log N }{N^\e}\right)^{1/2}\right)
=O(N^{-\e/3})
\eeq
Lastly, it remains to show that $\left|\exp\left(\frac{N}{2}(B(\lambda_1-f(t)N^{-1}+i(N^\e+t)N^{-1})-B(\cpmgn))\right)\right|$ is bounded and it suffices to show that $\re\left(\frac{N}{2}(B(\lambda_1-f(t)N^{-1}+i(N^\e+t)N^{-1})-B(\cpmgn))\right)$ is bounded above.
\beq\begin{split}
\re&\left[\frac{N}{2}(B(\lambda_1-f(t)N^{-1}+i(N^\e+t)N^{-1})-B(\cpmgn)]\right)\\
=&\re\left[\frac{N}{2}\cdot\frac{(H+\xi)^2\beta}{N^2}\sum_{j=1}^N\left(\frac{n_j^2}{\lambda_1-f(t)N^{-1}+i(N^\e+t)N^{-1}-\lambda_j}-\frac{n_j^2}{\cpmgn-\lambda_j}\right)\right]\\
\end{split}\eeq
We observe that the real part of the $j=1$ term in the summation is negative and, furthermore, $\sum_{j=2}^N(-\frac{n_j^2}{\cpmgn-\lambda_j})$ is negative.  Removing these terms, we see that the quantity above has upper bound
\beq\begin{split}
%=&\re\left[\frac{(H+\xi)^2\beta}{2}\left(\frac{n_1^2}{-f(t)+i(N^\e+t)}-\frac{n_1^2}{\so_\mgn }+\frac{1}{N}\sum_{j=2}^N\left(\frac{n_j^2}{\lambda_1-f(t)N^{-1}+i(N^\e+t)N^{-1}-\lambda_j}-\frac{n_j^2}{\cpmgn-\lambda_j}\right)\right)\right]\\
&\re\left[\frac{(H+\xi)^2\beta}{2N}\sum_{j=2}^N\frac{n_j^2}{\lambda_1-f(t)N^{-1}+i(N^\e+t)N^{-1}-\lambda_j}\right]\\
&=\frac{(H+\xi)^2\beta}{2N}\sum_{j=2}^N\frac{n_j^2(\lambda_1-f(t)N^{-1}-\lambda_j)}{(\lambda_1-f(t)N^{-1}-\lambda_j)^2+(N^\e+t)^2N^{-2}}.
\end{split}\eeq
Now consider two cases.  For $t<N$, the expression in the last line is bounded above by
\beq
\frac{(H+\xi)^2\beta}{2N}\sum_{j=2}^N\frac{n_j^2}{\lambda_1-f(t)N^{-1}-\lambda_j}=\frac{(H+\xi)^2\beta}{2}\sum_{j=2}^N\frac{n_j^2N^{-1/3}}{a_1-a_j-f(t)N^{-1/3}}
\eeq
This will be $O(1)$ because $\sum_{j=2}^N\frac{n_j^2N^{-1/3}}{a_1-a_j}=1+O(N^{-\frac13+\ep})$ on the event $\event$ and, for sufficiently small $\ep$, we have $f(t)N^{-1/3}<\frac12 (a_1-a_2)$ since $f(t)N^{-1/3}=O(N^{\Delta-\frac{1}{3}})$ where $\Delta<\frac{1}{3}$ and $a_1-a_2>N^{-\ep/3}$ on $\event$.

In the case where $t\geq N$, we instead use the upper bound
\beq
\frac{(H+\xi)^2\beta}{2N}\sum_{j=2}^N\frac{n_j^2(\lambda_1-\lambda_j)}{(N^\e+t)^2N^{-2}}
\leq\frac{(H+\xi)^2\beta}{2N}\sum_{j=2}^N4n_j^2
\eeq
Since $n_j^2$ are i.i.d. chi-squared random variables, this sum is $O(1)$ with overwhelming probability.
\end{proof}

%%%%%%%%%%%%%%%%%%%

\section{Applying this method to the overlap of two replicas}\label{sec:2ol}

Using a method similar to the proofs in Sections \ref{sec:ext} and \ref{sec:intapprox}, we can prove Theorem \ref{thm:rep} for $\ovl$, the overlap of two replicas (this is a rigorous re-formulation of Result 10.6 from 
\cite{BCLW}).  The generating function for the overlap with a replica involves a double integral rather than a single integral, but we can use the same contour as in Sections \ref{sec:ext} and \ref{sec:intapprox} for both integrals and then transform to polar coordinates in order to prove the desired decay properties outside a neighborhood of the critical point.  While our method works to prove this theorem, we do not provide the details here because it also follows from Theorem 2.14 of \cite{LS}, as we explain below.

\begin{theorem}\label{thm:rep} Given $T<1$ and $h=HN^{-1/2}$ for some some fixed $H\geq0$, we have the following asymptotic formula for the moment generating function of $\ovl$, the overlap with a replica.  This formula holds on the event $\event$ (which has probability at least $1-N^{-\ep/10}$) for any sufficiently small $\ep>0$ and $\xi=O(1)$. 
\beq
	\langle e^{ \xi \frac{\ovl}{1-T} } \rangle 
	=
	\frac{ \cosh\left(   \frac{2\sqrt{1-T}H |n_1| }{T} \right) e^{\xi} + e^{-\xi} }
	{\cosh\left( \frac{2\sqrt{1-T}H |n_1| }{T} \right) +1} +O(N^{-\frac{1}{21}+\frac\ep7})
\eeq
\end{theorem}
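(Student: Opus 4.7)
The plan is to mirror the proof of Theorem \ref{thm:ext}, starting from a double contour integral representation of $\langle e^{\xi\ovl/(1-T)}\rangle$. Applying the Kosterlitz--Thouless--Jones Laplace-inversion trick simultaneously to both sphere constraints $\|\spin^{(1)}\|^2=\|\spin^{(2)}\|^2=N$ introduces two contour variables $z_1,z_2$. Diagonalizing in the eigenbasis of $M$, the $2N$ Gaussian integrals over the coefficients of $\spin^{(1)}$ and $\spin^{(2)}$ reduce to an $N$-fold product of coupled $2\times 2$ Gaussian integrals (the coupling coming from the $\xi\spin^{(1)}\!\cdot\!\spin^{(2)}/[N(1-T)]$ term), which can be performed explicitly. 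This yields
\begin{equation}
\langle e^{\xi\ovl/(1-T)}\rangle=\frac{\iint e^{\frac{N}{2}\Govl(z_1,z_2)}\,\dd z_1\,\dd z_2}{\left(\int e^{\frac{N}{2}\G(z)}\,\dd z\right)^2},
\end{equation}
where $\Govl(z_1,z_2)$ equals $\G(z_1)+\G(z_2)$ plus an explicit $\xi$-dependent coupling built out of the sums $\sum_i n_i^2/[(z_1-\lambda_i)(z_2-\lambda_i)-(\xi/[N(1-T)])^2]$ and the corresponding logarithmic determinant. This is the rigorous counterpart of the formula derived non-rigorously in Section~10 of \cite{BCLW}.

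Second, critical-point analysis. By the $z_1\leftrightarrow z_2$ symmetry of $\Govl$, the saddle lies on the diagonal $z_1=z_2=\cpovl:=\lambda_1+\sovl/N$, where $\sovl>0$ solves an equation analogous to \eqref{eq:ext1/2sdef} but with an additional $O(\xi^2/N)$ term from the coupling. Repeating the argument of Lemma \ref{lem:ext1/2cp} verbatim, on the event $\event$ one obtains the same regime $T<\sovl<\ep\log N$, so the same asymptotic scale applies.

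Third, the local computation around the saddle. Substituting $z_j=\lambda_1+u_j/N$ and deforming both contours to the keyhole contour $C_1\cup C_2\cup C_3$ of Section \ref{sec:ext}, I expand $\Govl$ to the order needed, as in the proof of Lemma \ref{lem:ext1/2integral}. On the semicircular piece $C_2\times C_2$, polar-type coordinates $u_j=\sovl e^{\ii\theta_j}$ separate the radial and angular parts; the radial integrations reduce to modified-Bessel integrals via \eqref{eq:ntif}, while the remaining angular double integral produces the $\cosh$ ratio by direct evaluation (after linearizing the $\xi$ coupling one obtains integrals of the form $\int\int e^{a\cos\theta_1+b\cos\theta_2+c\cos(\theta_1-\theta_2)}\,\dd\theta_1\,\dd\theta_2$). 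Combining with the $e^{\frac{N}{2}[\Govl(\cpovl,\cpovl)-2\G(\cp)]}$ prefactor collapses the ratio to exactly the claimed cosh expression; as a sanity check, that expression equals $(P^2+(1-P)^2)e^{\xi}+2P(1-P)e^{-\xi}$ with $P$ as defined after Theorem \ref{thm:ext}, which is exactly the moment generating function of the independent sum of two Bernoullis (one for each replica choosing its side of the double cone).

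The main obstacle is the tail estimate, now a two-dimensional analogue of Lemma \ref{lem:intapproxspecialcase}. I would split the complement of the local square $\{|u_1|,|u_2|\le N^\e\}$ into three pieces: both coordinates large, and the two mixed regions with exactly one coordinate small. In the mixed regions, one variable sits in the good saddle neighborhood and the coupling term is $O(\xi/N)$ uniformly on $\event$, so the tail bound reduces to Lemma \ref{lem:intapproxspecialcase} times the central bound for the other variable. In the doubly-large region, the coupling is again controlled on the deformed contour, and $\Govl\approx\G(z_1)+\G(z_2)$ up to a bounded error, so the bound factorizes into the product of two one-dimensional tail bounds, each $O(N^{-\e/3})$. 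Summing and optimizing $\e$ as in Section \ref{sec:ext} yields the claimed $O(N^{-1/21+\ep/7})$ error. As a final cross-check, the result should agree with Theorem~2.14 of \cite{LS} specialized to the microscopic regime $h=HN^{-1/2}$.
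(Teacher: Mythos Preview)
Your proposal matches the paper's approach almost exactly: the paper does not give a full proof of Theorem~\ref{thm:rep} but only a sketch in Section~\ref{sec:2ol}, stating that one uses the double contour integral representation, applies the same keyhole contour from Sections~\ref{sec:ext}--\ref{sec:intapprox} in each variable, and then defers the details since the result also follows from Theorem~2.14 of \cite{LS}. The one place where your outline diverges from the paper's sketch is the tail estimate: the paper suggests passing to polar coordinates in the $(u_1,u_2)$ plane to obtain the two-dimensional analogue of Lemma~\ref{lem:intapproxspecialcase}, whereas you propose splitting into the ``both large'' and ``mixed'' regions and invoking the near-factorization $\Govl\approx\G(z_1)+\G(z_2)$; both strategies are viable here, with the polar-coordinate route being slightly cleaner since it avoids having to control the coupling term separately on each region.
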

Note that the leading order term on the right hand side is the moment generating function of a shifted Bernoulli random variable that takes values $1$ and $-1$ with probability $P$ and $1-P$ respectively, where
\beq
	P= \frac{ \cosh\left(  \frac{2\sqrt{1-T}H |n_1| }{T} \right) }{\cosh\left(  \frac{2\sqrt{1-T}H |n_1| }{T}  \right) +1}  .
\eeq
Thus, for large $N$, we can conclude that, on the event $\event$, the overlap $\ovl$ behaves in its leading order like a shifted Bernoulli random variable.  This conclusion also follows from Theorem 2.14 of \cite{LS}, which states that, for sufficiently $\ep>0$, there exist $\ep_1>0$ such that with probability at least $1-N^{-\ep_1}$ and all $t>0$,
\beq
\left\langle\mathbf{1}_{\{|N^{-1}\sigma^{(1)}\cdot\sigma^{(2)}\mp(1-\beta^{-1})|\leq t\} }\right\rangle
=\frac12\pm\frac12 \tanh^2\left(\sqrt{v_1^2\theta(\beta-1)}\right)
+N^\ep O\left(t+N^{-2/3+\ep}t^{-2}+N^{-1/3}\right).
\eeq
While their theorem is formulated and proved in a different manner than Theorem \ref{thm:rep}, their result implies ours.

\begin{comment}
Furthermore, we can use the moment generating function to obtain the Gibbs expectation
 \beq\label{eq:BCLWrepmoment1}
 \langle\ovl\rangle=(1-T)\tanh^2\left(\frac{\sqrt{1-T}}{T}H|n_1| \right)+O(N^{-\frac{1}{21}+\frac\ep7}),
 \eeq
and the variance
\beq\label{eq:BCLWrepmoment2}
\Var(\ovl)%=\langle\ovl^2\rangle-\langle\ovl\rangle^2
=(1-T)^2\left(1-\tanh^4\left(\frac{\sqrt{1-T}}{T}H|n_1| \right)\right)+O(N^{-\frac{1}{21}+\frac\ep7}),
\eeq
where both of these formulas hold on the event $\event$.
% with $\ep<\e''<\frac13$.
\end{comment}

%%%%%%%%%%%%%%%%%%%

%\bibliographystyle{abbrv}
%\bibliography{ssk_ext}

\end{document}